\DeclareFontFamily{U}{tipa}{}
\DeclareFontShape{U}{tipa}{bx}{n}{<->tipabx10}{}
\newcommand{\arc@char}{{\usefont{U}{tipa}{bx}{n}\symbol{62}}}%
\newcommand{\arc}[1]{\mathpalette\arc@arc{#1}}
\newcommand{\arc@arc}[2]{%
  \sbox0{$\m@th#1#2$}%
  \vbox{
    \hbox{\resizebox{\wd0}{\height}{\arc@char}}
    \nointerlineskip
    \box0
  }%
}
\newcommand{\doublewedge}{\big@doubleop{\wedge}}
\newcommand{\big@doubleop}[1]{%
  \DOTSB\mathop{\mathpalette\big@doubleop@aux{#1}}\slimits@
}
\newcommand\big@doubleop@aux[2]{%
  \sbox\z@{$\m@th#1#2$}%
  \makebox[1.35\wd\z@][s]{$\m@th#1#2\hss#2$}%
}
\newcommand{\abs}[1]{\left|#1\right|}     %usage: \abs{x} yields |x|.
\newcommand{\cl}{\mbox{cl}}  %needed, since $cl(A)$ treats cl as a variable.
\newcommand{\Int}{\mbox{int}} %needed, since $int(A)$ treats int as a variable.
\newcommand{\bdy}{\mbox{bdy}} %needed, since $bdy(A)$ treats bdy as a variable.
\newcommand{\rb}{\mbox{rb}} %needed, since $rb A$ treats rb as a variable.
\newcommand{\crb}{\mathcal{R}b} %collection of planar ribbon complexes in a cell complex
\newcommand{\cyc}{\mbox{cyc}} %needed, since $cyc A$ treats cyc as a variable.
\newcommand{\near}{\delta} % Cech proximity
\newcommand{\dcap}{\mathop{\cap}\limits_{\Phi}} % descriptive intersection
\newcommand{\dnear}{\delta_{\Phi}} % descriptive proximity
\newcommand{\norm}[1]{\left\|#1\right\|}  %usage: \norm{x-p} yields ||x-p||
\renewcommand{\thesubfigure}{\thefigure.\arabic{subfigure}}
\renewcommand{\p@subfigure}{}
\renewcommand{\@thesubfigure}{\thesubfigure:\hskip\subfiglabelskip}
\theoremstyle{plain}
\newtheorem{theorem}{Theorem}
\newtheorem{lemma}{Lemma}
\newtheorem{remark}{Remark}
\newtheorem{definition}{Definition}
\newtheorem{example}{Example}
\newtheorem{corollary}{Corollary}
\begin{document}

\title{Descriptive Fixed Set Properties\\ for Ribbon Complexes}

\author[J.F. Peters]{J.F. Peters}
\address{
Computational Intelligence Laboratory,
University of Manitoba, WPG, MB, R3T 5V6, Canada and
Department of Mathematics, Faculty of Arts and Sciences, Ad\.{i}yaman University, 02040 Ad\.{i}yaman, Turkey,
}
\email{james.peters3@umanitoba.ca}
\thanks{The research has been supported by the Natural Sciences \&
Engineering Research Council of Canada (NSERC) discovery grant 185986 
and Instituto Nazionale di Alta Matematica (INdAM) Francesco Severi, Gruppo Nazionale per le Strutture Algebriche, Geometriche e Loro Applicazioni grant 9 920160 000362, n.prot U 2016/000036 and Scientific and Technological Research Council of Turkey (T\"{U}B\.{I}TAK) Scientific Human
Resources Development (BIDEB) under grant no: 2221-1059B211301223.}
\author[T. Vergili]{T. Vergili}
\address{
Department of Mathematics, Karadeniz Technical University, Trabzon, Turkey,
}
\email{tane.vergili@ktu.edu.tr}

\subjclass[2010]{37C25 (fixed point theory); 55M20 (fixed points); 54E05 (proximity); 55U10 (Simplicial sets and complexes)}

\date{}

\dedicatory{Dedicated to L.E.J. Brouwer and Mahlon M. Day}

\begin{abstract}
This article introduces descriptive fixed sets and their properties in descriptive proximity spaces viewed in the context of planar ribbon complexes.  These fixed sets are a byproduct of descriptive proximally continuous maps that spawn fixed subsets, eventual fixed subsets and almost fixed subsets of the maps.  For descriptive continuous map $f$ on a descriptive proximity space $X$, a subset $A$ of $X$ is fixed, provided the description of $f(A)$ matches the desription of $A$.  In terms ribbon complexes in a CW space, an Abelian group representation of a ribbon is Day-amenable and each amenable ribbon has a fixed point.  A main result in this paper is that if $h$ is a proximal descriptive conjugacy between maps $f,g$, then if $A$ is an [ordinary, eventual, almost] descriptively fixed subset of $f$, then $h(A)$ is a descriptively fixed subset of $g$.
\end{abstract}
\keywords{Amenable group, Description, Descriptive Proximally Continuous Map, Descriptively Fixed Set, Ribbon Complex}
\maketitle
\tableofcontents

\section{Introduction}
This paper introduces descriptive fixed set properties that are a natural outcome of descriptive proximally continuous set-valued maps.  Results given here for such maps spring from fundamental results for fixed points given by L.E.J. Brouwer~\cite{Brouwer1911fixedPoints}   

\begin{theorem}\label{thm:Brouwer}{\rm Brouwer Fixed Point Theorem~\cite[\S 4.7, p. 194]{Spanier1966AlgTopology}}$\mbox{}$\\
Every continuous map from $\mathbb{R}^n$ to itself has a fixed point.
\end{theorem}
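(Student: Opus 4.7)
My plan is to prove Brouwer's fixed point theorem in its standard formulation following Spanier, where the cited reference situates the theorem on the closed $n$-ball $D^n \subset \mathbb{R}^n$; the argument will proceed via the classical no-retraction lemma. Suppose for contradiction that $f: D^n \to D^n$ is continuous and has no fixed point, so $x \ne f(x)$ for every $x \in D^n$. For each such $x$, define $r(x) \in S^{n-1}$ to be the unique point where the ray emanating from $f(x)$ and passing through $x$ meets the boundary sphere. The key claim is that $r$ is a continuous retraction of $D^n$ onto $S^{n-1}$, and then the no-retraction lemma will supply the contradiction.

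The no-retraction step proceeds by singular homology with integer coefficients. For the inclusion $\iota: S^{n-1} \hookrightarrow D^n$, the defining relation $r \circ \iota = \mathrm{id}_{S^{n-1}}$ forces $H_{n-1}(r) \circ H_{n-1}(\iota) = \mathrm{id}$ on $H_{n-1}(S^{n-1}; \mathbb{Z}) \cong \mathbb{Z}$. Since $D^n$ is contractible, $H_{n-1}(D^n;\mathbb{Z}) = 0$ for $n \geq 2$, so the identity map of $\mathbb{Z}$ would factor through the zero group, an immediate contradiction. The low-dimensional cases $n = 0, 1$ I would treat separately, using reduced homology $\widetilde{H}_0$ (so that $\widetilde{H}_0(S^0) \cong \mathbb{Z}$ while $\widetilde{H}_0(D^1) = 0$) or an elementary intermediate-value/connectedness argument on the interval and on $S^1$.

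The chief technical obstacle is verifying continuity of the ray retraction $r$. I would write $r(x) = x + t(x)\bigl(x - f(x)\bigr)$, where $t(x) \geq 0$ is the unique nonnegative root of the scalar quadratic $|x + t(x - f(x))|^2 = 1$. Because $x \ne f(x)$ everywhere and $D^n$ is compact, $\inf_{x \in D^n} |x - f(x)| > 0$, so the leading coefficient of this quadratic is bounded away from zero and its discriminant is strictly positive; hence $t(x)$ depends continuously on $x$ via the quadratic formula, making $r$ continuous. Since $x \in S^{n-1}$ forces $t(x) = 0$, we have $r|_{S^{n-1}} = \mathrm{id}$, producing the forbidden retraction and completing the argument for Theorem~\ref{thm:Brouwer}.
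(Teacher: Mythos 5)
The paper offers no proof of this theorem at all---it is imported verbatim with a citation to Spanier---so there is nothing internal to compare against; your argument is the standard no-retraction proof, which is in fact the route Spanier takes in the cited section, and it is correct. Two remarks. First, you were right to quietly shift the domain to the closed ball $D^n$: the statement as printed in the paper, ``every continuous map from $\mathbb{R}^n$ to itself has a fixed point,'' is literally false (the translation $x \mapsto x + v$ for $v \ne 0$ has no fixed point); the theorem requires a compact convex domain, and your formulation is the one the citation actually supports. Second, your continuity argument for $t(x)$ is sound but could be tightened at one point: writing the quadratic as $a t^2 + 2bt + c = 0$ with $a = |x - f(x)|^2$, $b = \langle x, x - f(x)\rangle$, $c = |x|^2 - 1 \le 0$, the discriminant $b^2 - ac$ is positive because either $c < 0$ (so $-ac > 0$) or $c = 0$, in which case $|x| = 1$ and $b = 1 - \langle x, f(x)\rangle > 0$ since $f(x) \ne x$; this also shows $t = 0$ is the \emph{unique} nonnegative root on the boundary, which is the fact you need for $r|_{S^{n-1}} = \mathrm{id}$. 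With those details filled in, the proof is complete and consistent with the cited source.
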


\noindent and given by M.M. Day~\cite{Day1957amenableSemigroups,Day1961amenableSemigroups}, which carry over in terms of finite group representations of ribbons~\cite{Peters2020AMSBullRibbonComplexes} containing intersecting cycles or bridge edges between ribbon cycles.

\begin{theorem}\label{thm:Day}{\rm Day Abelian Group [Semigroup] Theorem~\cite[p. 516]{Day1957amenableSemigroups,Day1961amenableSemigroups}}$\mbox{}$\\
Every finite group is amenable.
\end{theorem}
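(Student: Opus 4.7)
The plan is to prove amenability by exhibiting a left-invariant mean directly, using the fact that finiteness gives us a canonical normalized counting measure. First I would recall the definition: a group $G$ is (left) amenable if there exists a finitely additive probability measure $\mu$ on the power set $\mathcal{P}(G)$ that is invariant under left translation, i.e., $\mu(gA)=\mu(A)$ for every $g\in G$ and every $A\subseteq G$. Equivalently, there is a left-invariant mean $m\colon \ell^{\infty}(G)\to\mathbb{R}$, meaning a positive linear functional with $m(1)=1$ satisfying $m(L_g f)=m(f)$, where $(L_g f)(x)=f(g^{-1}x)$.

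Given a finite group $G$ with $|G|=n$, the natural candidate is the normalized counting measure $\mu(A)=|A|/n$ for $A\subseteq G$, or equivalently the averaging functional
\[
m(f)=\frac{1}{|G|}\sum_{x\in G}f(x),\qquad f\in\ell^{\infty}(G).
\]
The second step is to verify the axioms. Positivity, linearity, and $m(1)=1$ are immediate from the formula. The only substantive property to check is left invariance, which reduces to the observation that for every $g\in G$, the left translation map $x\mapsto gx$ is a bijection of $G$ onto itself; hence reindexing the sum via $y=g^{-1}x$ yields
\[
m(L_g f)=\frac{1}{|G|}\sum_{x\in G}f(g^{-1}x)=\frac{1}{|G|}\sum_{y\in G}f(y)=m(f).
\]
The corresponding identity $|gA|=|A|$ establishes $\mu(gA)=\mu(A)$ on the measure side.

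The third step is to conclude: since $m$ (equivalently $\mu$) satisfies all the required properties, $G$ admits a left-invariant mean, and therefore $G$ is amenable. There is no real obstacle here — the proof is essentially a one-line verification once the definition is in place. The only subtlety worth flagging is to be explicit about which formulation of amenability is being used (mean on $\ell^{\infty}$, finitely additive invariant probability measure, or F\o lner condition); under the F\o lner formulation one would instead remark that the constant sequence $F_n=G$ is trivially a F\o lner sequence because $gG\triangle G=\emptyset$, giving the same conclusion. This finite-group result is the base case that supports the ribbon-group representation perspective developed in the remainder of the paper.
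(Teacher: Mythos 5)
Your proof is correct, and it is the standard argument: the averaging functional $m(f)=\tfrac{1}{|G|}\sum_{x\in G}f(x)$ is the canonical invariant mean on a finite group. Note, however, that the paper does not actually prove this statement at all; it is quoted as Day's theorem and supported only by a citation to Day's 1957 and 1961 papers, so your self-contained verification is genuinely more than the paper supplies. One small point to reconcile with the paper's own conventions: the definition of amenability given later in the paper (Section 4) requires a mean that is \emph{both} left and right invariant, whereas you verify only left invariance. For your candidate $m$ this costs nothing --- right translation $x\mapsto xg$ is also a bijection of $G$, so the identical reindexing argument gives $m(r_{g}f)=m(f)$ --- but you should state it explicitly so that the mean you exhibit matches the two-sided definition in force here. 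With that one-line addition, your argument fully establishes the theorem and does so more elementarily and more explicitly than the paper's appeal to the literature.
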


A direct consequence of Theorem~\ref{thm:Day} is that each amenable ribbon has a fixed point.

A number of important results in this paper spring from descriptive proximally continuous maps.  A descriptive proximally continuous map is defined over descriptive \v{C}ech proximity spaces~\cite[\S 4.1]{DiConcilio2018MCSdescriptiveProximities} in which the description of a nonempty set is in the form of a feature vector derived from a probe function.  

For a \v{C}ech proximity space $X$ containing a nonempty subset $A\in 2^X$ (collection of subsets of $X$), a probe function $\Phi:2^X\to \mathbb{R}^n$ in which $\Phi(A)$ is a feature vector of real values that describe $A$.  Pivotal in this work is the descriptive intersection~\cite{DiConcilio2018MCSdescriptiveProximities} (denoted by $\dcap$) between nonvoid sets $A,B\in 2^X$, which is set of points $x$ in $A\cup B$ with the requirement that $\Phi(x)$ in the intersection of the pair of descriptions $\Phi(A),\Phi(B)$.  

Let $\near$ be a \v{C}ech proximity and $(2^X,{\near}_1), (2^X,{\near}_2)$ be a pair of proximity spaces, $A,B\in 2^X$.  A map $(2^X,{\near}_1)\to (2^X,{\near}_2)$ is proximally continuous, provided $A\ \near\ B$ implies $f(A)\ \near\ f(B)$
\cite[\S 4, p. 20]{Naimpally70}. The notion of descriptive proximal continuity is an easy step beyond the traditional notion of proximal continuity.  Let $\dnear$ denote a descriptive proximity relation and let $(2^X,{\dnear}_1), (2^X,{\dnear}_2)$ be a pair of descriptive proximity spaces.  A map $(2^X,{\near}_1)\to (2^X,{\near}_2)$ is a descriptive proximally continuous map~\cite[\S 1.20.1, p. 48]{Peters2013springer}, provided $A\ \dnear\ B$ implies $f(A)\ \dnear\ f(B)$.

\section{Preliminaries}
This section briefly introduces descriptive \v{C}ech proximity spaces and descriptively proximal continuity.  The simplest form of proximity relation (denoted by $\delta$) on a nonempty set was intoduced by E. \v{C}ech~\cite{Cech1966}.  A nonempty set $X$ equipped with the relation $\near$ is a \v{C}ech proximity space (denoted by $(X,\near$)), provided the following axioms are satisfied.\\
\vspace{3mm}

\noindent {\bf \v{C}ech Axioms}

\begin{description}
\item[({\bf P}.0)] All nonempty subsets in $X$ are far from the empty set, {\em i.e.}, $A\ \not{\near}\ \emptyset$ for all $A\subseteq X$.
\item[({\bf P}.1)] $A\ \near\ B \Rightarrow B\ \near\ A$.
\item[({\bf P}.2)] $A\ \cap\ B\neq \emptyset \Rightarrow A\ \near\ B$.
\item[({\bf P}.3)] $A\ \near\ \left(B\cup C\right) \Rightarrow A\ \near\ B$ or $A\ \near\ C$.
\end{description}

\subsection{Descriptive Proximity} 
 Given that a nonempty set $E$ has $k \geq 1$ features such as Fermi energy $E_{Fe}$, cardinality $E_{card}$, a description $\Phi(E)$ of $E$ is a feature vector, {\em i.e.}, $\Phi(E) = \left(E_{Fe},E_{card}\right)$. Nonempty sets $A,B$ with overlapping descriptions are descriptively proximal (denoted by $A\ \dnear\ B$). 
  The descriptive intersection of nonempty subsets in $A\cup B$ (denoted by $A\ \dcap\ B$) is defined by
\[
A\ \dcap\ B = \overbrace{\left\{x\in A\cup B: \Phi(x) \in \Phi(A)\ \cap\ \Phi(B)\right\}.}^{\mbox{\textcolor{blue}{\bf {\em i.e.}, $\boldsymbol{\mbox{Descriptions}\ \Phi(A)\ \&\ \Phi(B)\ \mbox{overlap}}$}}}
\] 

Let $2^X$ denote the collection of all subsets in a nonvoid set $X$. A nonempty set $X$ equipped with the relation $\dnear$ with non-void subsets $A,B,C\in 2^X$ is a descriptive proximity space, provided the following descriptive forms of the \v{C}ech axioms are satisfied.\\
\vspace{3mm}

\noindent {\bf Descriptive \v{C}ech Axioms}

\begin{description}
\item[({\bf dP}.0)] All nonempty subsets in $2^X$ are descriptively far from the empty set, {\em i.e.}, $A\ \not{\dnear}\ \emptyset$ for all $A\in 2^X$.
\item[({\bf dP}.1)] $A\ \dnear\ B \Rightarrow B\ \dnear\ A$.
\item[({\bf dP}.2)] $A\ \dcap\ B\neq \emptyset \Rightarrow A\ \dnear\ B$.
\item[({\bf dP}.3)] $A\ \dnear\ \left(B\cup C\right) \Rightarrow A\ \dnear\ B$ or $A\ \dnear\ C$.
\end{description}

\noindent The converse of Axiom ({\bf dp}.2) also holds.
\begin{lemma}\label{lemma:dP2converse}{\rm \cite{Peters2019vortexNerves}}
Let $X$ be equipped with the relation $\dnear$, $A,B\in 2^X$.   Then $A\ \dnear\ B$ implies $A\ \dcap\ B\neq \emptyset$.
\end{lemma}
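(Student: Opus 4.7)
The plan is to unfold the descriptive proximity relation $\dnear$ into its feature-vector meaning, then exhibit an explicit witness in $A\ \dcap\ B$. The working definition given just before the descriptive \v{C}ech axioms states that $A\ \dnear\ B$ holds precisely when the descriptions $\Phi(A)$ and $\Phi(B)$ overlap, i.e. $\Phi(A)\ \cap\ \Phi(B)\neq \emptyset$. So from the hypothesis $A\ \dnear\ B$, I would first extract some feature vector $v\in \Phi(A)\ \cap\ \Phi(B)$.

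Next, using that $\Phi(A)=\{\Phi(a):a\in A\}$ by construction of the image of $\Phi$ on a set, I would pick a point $a\in A$ with $\Phi(a)=v$. Then $a\in A\subseteq A\cup B$ and $\Phi(a)=v\in \Phi(A)\ \cap\ \Phi(B)$, so by the defining equation
\[
A\ \dcap\ B\ =\ \left\{x\in A\cup B:\ \Phi(x)\in \Phi(A)\ \cap\ \Phi(B)\right\},
\]
the point $a$ lies in $A\ \dcap\ B$. This forces $A\ \dcap\ B\neq \emptyset$, which is the desired conclusion.

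The only genuine subtlety is that the axioms (\textbf{dP}.0)--(\textbf{dP}.3) do not by themselves imply this converse; they give only $A\ \dcap\ B\neq \emptyset \Rightarrow A\ \dnear\ B$. The converse relies on interpreting $\dnear$ as the relation ``descriptions overlap,'' exactly as the paper does in the sentence introducing $\dnear$ and as is standard in the cited descriptive \v{C}ech framework of Di Concilio et al. Once that interpretation is invoked, the proof reduces to the one-step extraction of a common description sketched above, so I expect no real obstacle beyond naming this interpretation explicitly at the start of the argument.
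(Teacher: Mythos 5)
Your proposal is correct and matches the paper's own argument: the paper likewise unfolds $A\ \dnear\ B$ "by definition" into the existence of $x\in A$ and $y\in B$ with $\Phi(x)=\Phi(y)$ and then observes that such points lie in $A\ \dcap\ B$, which is exactly your witness-extraction step phrased via a common feature vector $v\in\Phi(A)\cap\Phi(B)$. Your explicit remark that the converse follows from the definitional reading of $\dnear$ rather than from axioms ({\bf dP}.0)--({\bf dP}.3) is a fair and accurate clarification of what the paper leaves implicit.
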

\begin{proof}
Let $A,B\in 2^X$. By definition, $A\ \dnear\ B$ implies that there is at least one member $x\in A$ and $y\in B$ so that $\Phi(x) = \Phi(y)$, {\em i.e.}, $x$ and $y$ have the same description.  Then $x,y\in A\ \dcap\ B$.
Hence, $A\ \dcap\ B\neq \emptyset$, which is the converse of ({\bf dp}.2).    
\end{proof}

\begin{theorem}\label{theorem:dP2result}
Let $K$ be a cell complex and $\crb(K)$ be the  collection of planar ribbon complexes equipped with the proximity $\dnear$ in $K$ and  $\rb A,\rb B\in \crb(K)$.   Then $\rb A\ \dnear\ \rb B$ implies $\rb A\ \dcap\ \rb B\neq \emptyset$.
\end{theorem}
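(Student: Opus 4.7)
The plan is to reduce Theorem~\ref{theorem:dP2result} to a direct application of Lemma~\ref{lemma:dP2converse}. The hypothesis equips $\crb(K)$ with the descriptive proximity $\dnear$ inherited from $K$, and each planar ribbon complex $\rb A\in\crb(K)$ is in particular a nonempty subset of $K$, so $\crb(K)\subseteq 2^K$. Consequently, the ambient descriptive \v{C}ech proximity space structure on $2^K$ restricts to ribbon complexes, and the general implication proved in Lemma~\ref{lemma:dP2converse} applies verbatim with $A=\rb A$ and $B=\rb B$.

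Concretely, the steps I would carry out are the following. First, I would note that the collection $\crb(K)$ of planar ribbon complexes sits inside $2^K$, so that $\rb A,\rb B\in 2^K$ and the relation $\dnear$ on $\crb(K)$ is precisely the restriction of the descriptive proximity on $2^K$. Second, assuming $\rb A\ \dnear\ \rb B$, I would unpack the definition of descriptive proximity to produce points $x\in\rb A$ and $y\in\rb B$ with matching feature vectors $\Phi(x)=\Phi(y)$. Third, by the definition of the descriptive intersection $\dcap$, both $x$ and $y$ lie in $\rb A\ \dcap\ \rb B$, because $\Phi(x)\in\Phi(\rb A)\cap\Phi(\rb B)$ and similarly for $\Phi(y)$. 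Hence $\rb A\ \dcap\ \rb B\neq\emptyset$.

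There is essentially no technical obstacle here: the statement is a corollary of Lemma~\ref{lemma:dP2converse} once one observes that planar ribbon complexes form a subfamily of $2^K$ and that the descriptive proximity relation used in $\crb(K)$ is the same $\dnear$ used in the lemma. The only point that deserves explicit mention, and which I would include for clarity, is that descriptive matching requires only the equality of feature vectors for a single pair of points, which is why the presence of any shared-description point pair across $\rb A$ and $\rb B$ suffices to witness $\rb A\ \dcap\ \rb B\neq\emptyset$.
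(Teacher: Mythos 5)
Your proposal is correct and matches the paper's proof, which simply cites Lemma~\ref{lemma:dP2converse} as immediate; you supply the same reduction, just spelling out explicitly that $\crb(K)\subseteq 2^K$ and re-deriving the witness points $x,y$ with $\Phi(x)=\Phi(y)$. No divergence in approach.
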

\begin{proof}
Immediate from Lemma~\ref{lemma:dP2converse}. 
\end{proof}

\begin{figure}[!ht]
\centering
\subfigure[Ribbon $\rb E$ with non-intersecting cycles]
 {\label{fig:rbE}
\begin{pspicture}
%[showgrid=true]
(-1.5,-0.5)(4.0,3.0)
\psframe[linewidth=0.75pt,linearc=0.25,cornersize=absolute,linecolor=blue](-1.25,-0.25)(3.25,3)
\psline*[linestyle=solid,linecolor=green!30]%
(0,0)(1,0.5)(2.0,0.0)(3.0,0.5)(3.0,1.5)(2.0,2.0)(1,1.5)(0,2)
(-1,1.5)(-1,0.5)(0,0)
% rb A ribbon outer boundary
\psline[linestyle=solid,linecolor=black]%
(0,0)(1,0.5)(2.0,0.0)(3.0,0.5)(3.0,1.5)(2.0,2.0)(1,1.5)(0,2)
(-1,1.5)(-1,0.5)(0,0)
% rb A ribbon outer boundary vertexes
\psdots[dotstyle=o,dotsize=2.2pt,linewidth=1.2pt,linecolor=black,fillcolor=gray!80]%
(0,0)(1,0.5)(2.0,0.0)(3.0,0.5)(3.0,1.5)(2.0,2.0)(1,1.5)(0,2)
(-1,1.5)(-1,0.5)(0,0)
\psline[linestyle=solid](1,1.5)(2,2)\psline[arrows=<->](-1,1.7)(-0.3,2.0)\psline[arrows=<->](0.2,2.05)(0.8,1.75)
% rb A ribbon inner boundary = bdy(cyc B)
\psline*[linestyle=solid,linecolor=white]%
(0,0.25)(1,0.75)(2.0,0.25)(2.5,0.5)(2.5,0.75)(2.0,1.35)(1,1.25)(0,1.5)
(-.55,1.25)(-.55,0.75)(0,0.25)
\psline[linestyle=solid,linecolor=black]%
(0,0.25)(1,0.75)(2.0,0.25)(2.5,0.5)(2.5,0.75)(2.0,1.35)(1,1.25)(0,1.5)
(-.55,1.25)(-.55,0.75)(0,0.25)
% rb A ribbon inner boundary vertexes
\psdots[dotstyle=o,dotsize=2.2pt,linewidth=1.2pt,linecolor=black,fillcolor=gray!80]%
(0,0.25)(1,0.75)(2.0,0.25)(2.5,0.5)(2.5,0.75)(2.0,1.35)(1,1.25)(0,1.5)
(-.55,1.25)(-.55,0.75)(0,0.25)
%\psdots[dotstyle=o,dotsize=5.2pt,linewidth=1.2pt,linecolor=black,fillcolor=gray!90]%
%(-.80,1.05)(2.80,0.55)
\rput(-1.0,2.75){\footnotesize $\boldsymbol{K}$}
\rput(0.0,1.75){\footnotesize $\boldsymbol{rb E}$}
\rput(2.8,1.85){\footnotesize $\boldsymbol{cyc A}$}
\rput(2.5,1.25){\footnotesize $\boldsymbol{cyc B}$}
\end{pspicture}}\hfil
\subfigure[Ribbon $\rb E'$ with intersecting cycles]
 {\label{fig:rbNrvE}
\begin{pspicture}
%[showgrid=true]
(-1.5,-0.5)(4.0,3.0)
\psframe[linewidth=0.75pt,linearc=0.25,cornersize=absolute,linecolor=blue](-1.35,-0.25)(3.25,3)
\psline*[linestyle=solid,linecolor=green!30]%
(0,0)(1,0.5)(2.0,0.0)(3.0,0.5)(3.0,1.5)(2.0,2.0)(1,1.5)(0,2)
(-1,1.5)(-1,0.5)(0,0)
% rb A ribbon outer boundary
\psline[linestyle=solid,linecolor=black]%
(0,0)(1,0.5)(2.0,0.0)(3.0,0.5)(3.0,1.5)(2.0,2.0)(1,1.5)(0,2)
(-1,1.5)(-1,0.5)(0,0)
% rb A ribbon outer boundary vertexes
\psdots[dotstyle=o,dotsize=2.2pt,linewidth=1.2pt,linecolor=black,fillcolor=gray!80]%
(0,0)(1,0.5)(2.0,0.0)(3.0,0.5)(3.0,1.5)(2.0,2.0)(1,1.5)(0,2)
(-1,1.5)(-1,0.5)(0,0)
\psline[linestyle=solid](1,1.5)(2,2)\psline[arrows=<->](-1,1.7)(-0.3,2.0)\psline[arrows=<->](0.2,2.05)(0.8,1.75)
% rb A ribbon inner boundary = bdy(cyc B)
\psline*[linestyle=solid,linecolor=white]%
(0,0.25)(1,0.5)(2.0,0.25)(2.5,0.5)(2.5,0.75)(2.0,2.0)(1,1.25)(0,1.5)
(-.55,1.25)(-1,1.5)(0,0.25)
\psline[linestyle=solid,linecolor=black]%
(0,0.25)(1,0.5)(2.0,0.25)(2.5,0.5)(2.5,0.75)(2.0,2.0)(1,1.25)(0,1.5)
(-.55,1.25)(-1,1.5)(0,0.25) %(-.55,1.25)(-.55,0.75)(0,0.25)
% rb A ribbon inner boundary vertexes
\psdots[dotstyle=o,dotsize=2.2pt,linewidth=1.2pt,linecolor=black,fillcolor=gray!80]%
(0,0.25)(1,0.5)(2.0,0.25)(2.5,0.5)(2.5,0.75)(2.0,2.0)(1,1.25)(0,1.5)
(-.55,1.25)(-1,1.5)(0,0.25)
\psdots[dotstyle=o,dotsize=2.5pt,linewidth=1.2pt,linecolor=black,fillcolor=red!80]% shared boundary vertex
(-1,1.5)(2.0,2.0)(1,0.5)
\rput(-1.0,2.75){\footnotesize $\boldsymbol{K'}$}
\rput(-1.1,1.6){\footnotesize $\boldsymbol{a}$}
\rput(2.0,2.15){\footnotesize $\boldsymbol{a'}$}
\rput(1,0.65){\footnotesize $\boldsymbol{a''}$}
\rput(0.0,1.655){\footnotesize $\boldsymbol{rb E'}$}
\rput(2.8,1.85){\footnotesize $\boldsymbol{cyc A'}$}
\rput(2.5,1.25){\footnotesize $\boldsymbol{cyc B'}$}
l\end{pspicture}}
\caption[]{Sample planar ribbon structures}
\label{fig:2Ribbons}
\end{figure}

\subsection{Planar Ribbons}\label{sec:ribbonStructures}
This section briefly looks at planar ribbon structures in planar CW spaces. 
Briefly, A nonvoid collection of cell complexes $K$ is a \emph{Closure finite Weak} (CW) space, provided $K$ is Hausdorff (every pair of distinct cells is contained in disjoint neighbourhoods~\cite[\S 5.1, p. 94]{Naimpally2013}) and the collection of cell complexes in $K$ satisfy the Alexandroff-Hopf-Whitehead~\cite[\S III, starting on page 124]{AlexandroffHopf1935Topologie},~\cite[pp. 315-317]{Whitehead1939homotopy}, ~\cite[\S 5, p. 223]{Whitehead1949BAMS-CWtopology} conditions, namely, containment (the closure of each cell complex is in $K$) and intersection (the nonempty intersection of cell complexes is in $K$). Each planar ribbon contains a pair of nested, usually non-concentric filled cycles. A ribbon cycle is a simple closed curve defined by a sequence of path-connected vertexes.  Cycle Vertices are path-connected, provided there is a sequence of edges between each pair of vertices in the cycle.  A ribbon cycle is filled, since the interior of each cycle is nonempty.   Planar ribbons spring naturally from the exclusion of the interior of the interior of the inner cycle in a pair of nested cycles.  

\begin{definition} Planar Ribbon\label{def:ribbon}{\rm \cite{Peters2020AMSBullRibbonComplexes}}.\\
Let $\cyc A, \cyc B$ be nesting filled cycles (with $\cyc B$ in the interior of $\cyc A$) defined on a finite, bounded, planar region in a CW space $K$.  A \emph{planar ribbon} $E$ (denoted by $\rb E$) is defined by
\[
\rb E = \overbrace{
\left\{\cl(\cyc A)\setminus \left\{\cl(\cyc B)\setminus \Int(\cyc B)\right\}: \bdy(\cl(\cyc B))\subset \cl(\rb E)\right\}.}^{\mbox{\textcolor{blue}{\bf $\bdy(\cl(\cyc B))$ defines the inner boundary of $\cl(\rb E)$.}}}\mbox{\textcolor{blue}{\Squaresteel}}
\]
\end{definition}

\begin{remark}
From Def.~\ref{def:ribbon}, the intersection of a pair of nested cycles $\cyc A,\cyc B$ in a ribbon $\rb E$ can be either empty ({\em e.g.}, $\rb E$ cycles in Fig.~\ref{fig:rbE} have no vertexes in common) or non-empty ({\em e.g.}, cycles $\cyc A',\cyc B'\in \rb E'$ in Fig.~\ref{fig:rbNrvE}, $\cyc A'\cap \cyc B' = \left\{a,a',a''\right\}$).   In addition, zero or more edges can be attached between ribbon cycles. 
\textcolor{blue}{\Squaresteel}
\end{remark}

In Example~\ref{ex:2ribbons}, the Betti number\footnote{an intuitive form of Betti number introduced by A.J. Zomorodian in~\cite[\S 4.3.2, p. 57]{Zomorodian2001BettiNumbers}.} $\beta_0(\cyc A)$ is a count of the number of vertexes in ribbon cycle $\cyc A$.

\begin{example}\label{ex:2ribbons}
A pair of sample planar ribbon structures is shown in Fig.~\ref{fig:2Ribbons}.  In Fig.~\ref{fig:rbE}, ribbon $\rb E$ is defined by a pair of non-intersecting cycles $\cyc A,\cyc B$.  Assume that complex $K$ is equipped with the descriptive proximity $\dnear$ with $\Phi(\rb E)$ equal to the number of vertices in its outer cycle $\cyc A$.  Similarly, let complex $K'$ in Fig.~\ref{fig:rbNrvE} be equipped with the descriptive proximity $\dnear$ with $\Phi(\rb E')$ equal to the number of vertices in its outer cycle $\cyc A'$.  In that case, $\rb E\ \dnear\ \rb E'$, since
\[ 
\Phi(\rb E) = \beta_0(\cyc A) = \Phi(\rb E') = \beta_0(\cyc A').
\]
Hence, from Lemma~\ref{lemma:dP2converse}, $\rb E\ \dnear\ \rb E'\Rightarrow \rb E\ \dcap\ \rb E'\neq \emptyset$.
\textcolor{blue}{\Squaresteel}
\end{example}

\begin{figure}[!ht]
\centering
\subfigure[Ribbon $\rb E''$ cycles with common vertex $g$]
 {\label{fig:rbE2}
\begin{pspicture}
%[showgrid=true]
(-1.5,-0.5)(4.0,3.0)
\psframe[linewidth=0.75pt,linearc=0.25,cornersize=absolute,linecolor=blue](-1.25,-0.25)(3.4,3)
\psline*[linestyle=solid,linecolor=green!30]%
(0,0)(1,0.5)(2.0,0.0)(3.0,0.5)(3.0,1.5)(2.0,2.0)(1,1.5)(0,2)
(-1,1.5)(-1,0.5)(0,0)
% rb A ribbon outer boundary
\psline[linestyle=solid,linecolor=black]%
(0,0)(1,0.5)(2.0,0.0)(3.0,0.5)(3.0,1.5)(2.0,2.0)(1,1.5)(0,2)
(-1,1.5)(-1,0.5)(0,0)
% rb A ribbon outer boundary vertexes
\psdots[dotstyle=o,dotsize=2.2pt,linewidth=1.2pt,linecolor=black,fillcolor=gray!80]%
(0,0)(1,0.5)(2.0,0.0)(3.0,0.5)(3.0,1.5)(2.0,2.0)(1,1.5)(0,2)
(-1,1.5)(-1,0.5)(0,0)
\psline[linestyle=solid](1,1.5)(2,2)\psline[arrows=<->](-1,1.7)(-0.3,2.0)\psline[arrows=<->](0.2,2.05)(0.8,1.75)
% rb A ribbon inner boundary = bdy(cyc B)
\psline*[linestyle=solid,linecolor=white]%
(0,0.25)(1,0.75)(2.0,0.25)(2.5,0.5)(3.0,0.5)(2.0,1.35)(1,1.25)(0,1.5)
(-.55,1.25)(-.55,0.75)(0,0.25)
\psline[linestyle=solid,linecolor=black]%
(0,0.25)(1,0.75)(2.0,0.25)(2.5,0.5)(3.0,0.5)(2.0,1.35)(1,1.25)(0,1.5)
(-.55,1.25)(-.55,0.75)(0,0.25)
% rb A ribbon inner boundary vertexes
\psdots[dotstyle=o,dotsize=2.2pt,linewidth=1.2pt,linecolor=black,fillcolor=gray!80]%
(0,0.25)(1,0.75)(2.0,0.25)(2.5,0.5)(3.0,0.5)(2.0,1.35)(1,1.25)(0,1.5)
(-.55,1.25)(-.55,0.75)(0,0.25)
\psdots[dotstyle=o,dotsize=2.5pt,linewidth=1.2pt,linecolor=black,fillcolor=red!80]% shared boundary vertex
(3.0,0.5)
\rput(-1.0,2.78){\footnotesize $\boldsymbol{K''}$}
\rput(2.5,0.65){\footnotesize $\boldsymbol{b_1}$}
\rput(2.0,0.45){\footnotesize $\boldsymbol{b_2}$}
\rput(1,0.95){\footnotesize $\boldsymbol{b_3}$}
\rput(3.2,0.6){\footnotesize $\boldsymbol{g}$}
\rput(0.0,1.75){\footnotesize $\boldsymbol{rb E''}$}
\rput(2.8,1.85){\footnotesize $\boldsymbol{cyc A''}$}
\rput(2.5,1.25){\footnotesize $\boldsymbol{cyc B''}$}
\end{pspicture}}\hfil
\subfigure[Ribbon $\rb E'''$ cycles with 3 common vertexes $g_1,g_2$]{\label{fig:rbNrvE3}
\begin{pspicture}
%[showgrid=true]
(-1.5,-0.5)(4.0,3.0)
\psframe[linewidth=0.75pt,linearc=0.25,cornersize=absolute,linecolor=blue](-1.35,-0.25)(3.4,3)
\psline*[linestyle=solid,linecolor=green!30]%
(0,0)(1,0.5)(2.0,0.0)(3.0,0.5)(3.0,1.5)(2.0,2.0)(1,1.5)(0,2)
(-1,1.5)(-1,0.5)(0,0)
% rb A ribbon outer boundary
\psline[linestyle=solid,linecolor=black]%
(0,0)(1,0.5)(2.0,0.0)(3.0,0.5)(3.0,1.5)(2.0,2.0)(1,1.5)(0,2)
(-1,1.5)(-1,0.5)(0,0)
% rb A ribbon outer boundary vertexes
\psdots[dotstyle=o,dotsize=2.2pt,linewidth=1.2pt,linecolor=black,fillcolor=gray!80]%
(0,0)(1,0.5)(2.0,0.0)(3.0,0.5)(3.0,1.5)(2.0,2.0)(1,1.5)(0,2)
(-1,1.5)(-1,0.5)(0,0)
\psline[linestyle=solid](1,1.5)(2,2)\psline[arrows=<->](-1,1.7)(-0.3,2.0)\psline[arrows=<->](0.2,2.05)(0.8,1.75)
% rb A ribbon inner boundary = bdy(cyc B)
\psline*[linestyle=solid,linecolor=white]%
(0,0.25)(1,0.75)(2.0,0.25)(3.0,0.5)(2.5,0.75)(2.0,2.0)(1,1.25)(0,1.5)
(-.55,1.25)(-1,1.5)(0,0.25)
\psline[linestyle=solid,linecolor=black]%
(0,0.25)(1,0.75)(2.0,0.25)(3.0,0.5)(2.5,0.75)(2.0,2.0)(1,1.25)(0,1.5)
(-.55,1.25)(-1,1.5)(0,0.25) %(-.55,1.25)(-.55,0.75)(0,0.25)
% rb A ribbon inner boundary vertexes
\psdots[dotstyle=o,dotsize=2.2pt,linewidth=1.2pt,linecolor=black,fillcolor=gray!80]%
(0,0.25)(1,0.75)(2.0,0.25)(3.0,0.5)(2.5,0.75)(2.0,2.0)(1,1.25)(0,1.5)
(-.55,1.25)(-1,1.5)(0,0.25)
\psdots[dotstyle=o,dotsize=2.5pt,linewidth=1.2pt,linecolor=black,fillcolor=red!80]% shared boundary vertex
(-1,1.5)(3.0,0.5)(2.0,2.0)
\rput(-1.0,2.75){\footnotesize $\boldsymbol{K'''}$}
\rput(0,0.48){\footnotesize $\boldsymbol{b_1}$}
\rput(2.0,0.45){\footnotesize $\boldsymbol{b_2}$}
\rput(1,0.95){\footnotesize $\boldsymbol{b_3}$}
\rput(-1.1,1.65){\footnotesize $\boldsymbol{g_1}$}
\rput(3.15,0.6){\footnotesize $\boldsymbol{g_2}$}
\rput(2.0,2.15){\footnotesize $\boldsymbol{g_3}$}
\rput(0.0,1.655){\footnotesize $\boldsymbol{rb E'''}$}
\rput(2.8,1.85){\footnotesize $\boldsymbol{cyc A'''}$}
\rput(2.5,1.25){\footnotesize $\boldsymbol{cyc B'''}$}
\end{pspicture}}
\caption[]{Intersecting ribbon cycles}
\label{fig:3Ribbons}
\end{figure}

\subsection{Free Abelian Group Representation of a Ribbon with Intersecting Cycles}

Recall that a finite group $G$ is cyclic, provided every element $g\in G$ is an integer multiple of an element $a\in G$ (called the generator of $G$ and denoted by $\langle a\rangle$), {\em i.e.}, $g = a+\cdots + a = ka, k > 0$~\cite{Rotman1965groups}.  A free fg Abelian group $G$ representation of a ribbon $\rb E$ is the natural outcome of intersecting ribbon cycles (denoted by $G(+,\left\{\langle a\rangle\right\})$). For a pair of end points $a,a'$ on an edge $\arc{a,a'}$ in ribbon $\rb E$, $a+a'$ reads 'move from $a$ to $a'$'.  For example, in ribbon $\rb E'$ in Fig.~\ref{fig:rbNrvE} has vertex $a\in \cyc A'\cap \cyc B'$ defines a cyclic group on each of the cycles in $\rb E'$, {\em i.e.}, each vertex $a'\in \cyc A'$ or vertex $b'\in \cyc B'$ is reached by a multiple of moves from vertex $a$ to vertex $a'$ or to vertex $b'$.  The assumption here that $a' = a + a +\cdots + a = ka$, where $ka$ reads 'sequence of $k$ moves in the path between vertex $a$ and vertex $a'$. Recall that a group $G$ is a free finitely-generated (fg) group $G$ with generators $a_1,\dots,a_k$, provided each element $g\in G$ can be written as a linear combination of its generators~\cite{Goblin2010CUPhomology}. 

\begin{example}
A ribbon $\rb E''$ with cycles $\cyc A'',\cyc B''$ intersection at a point $g$ is shown in Fig.~\ref{fig:rbE2}.  Each of the cycle vertexes in this ribbon can be written as a linear combination of $g$.  For example, we have
\[
b_3 = \overbrace{g + b_1 + b_2 + b_3 \mapsto 3g.}^{\mbox{\textcolor{blue}{\bf 3 moves from $g$ to reach $b_3$}}}
\]
The cycles in ribbon $\rb E'''$ in Fig.~\ref{fig:rbNrvE3} intersect in $g_1,g_2,g_3$, which serve as generators of cyclic groups on the ribbon.  
%In that case, for example, we can write
%\[
%b_3 = \overbrace{g_1 + b_1 + b_3 \mapsto 2g_1 + 0g_2 + 0g_3.}^{\mbox{\textcolor{blue}{\bf 2 moves from $g_1$ + 0 moves from $g_2$\ \&\ $g_3$ to reach $b_3$}}}
%\]
Similarly,
\[
b_1 = \overbrace{g_2 + b_2 + b_3 + b_1 \mapsto 3g_2 + 0g_1.}^{\mbox{\textcolor{blue}{\bf 3 moves from $g_2$ + 0 moves from $g_1$\ \&\ $g_3$ to reach $b_1$}}}
\]
By choosing path containing a minimal number of moves between a generator and a target ribbon vertex, we obtain a unique linear combination of the generators for each ribbon vertex.
As a result, ribbon $\rb E'''$ is represented by a free fg Abelian group with generators $\langle g_1\rangle, \langle g_2\rangle, \langle g_3\rangle$.
\textcolor{blue}{\Squaresteel}
\end{example}

The Betti number $\beta_{\alpha}(G)$ is a count of the number of generators in a free fg Abelian group $G$ (rank of $G$)~\cite[\S 1.4, p. 24]{Munkres1984}. For example, vertexes $a$ in the nonempty intersection of cycles in a ribbon define a free fg Abelian group $G$ with respect to a set of generators $\left\{\langle a\rangle\right\})$,
 since there is at least one path between the generator vertex and any other ribbon vertex.  In effect, each vertex common to the cycles of ribbon $\rb E$ serves as a generator of the free fg Abelian group representing $\rb E$. 

\begin{lemma}\label{lem:bridgeGenerators}
Let $\rb E$ in a CW complex $K$ have intersecting cycles with $n$ vertexes in the intersection.  Then
$\beta_{\alpha}(\rb E) = n$.
\end{lemma}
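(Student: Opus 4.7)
The plan is to show that the $n$ common vertices in $\cyc A \cap \cyc B$ are precisely a minimal set of generators for the free finitely-generated Abelian group representation of $\rb E$, and hence the rank equals $n$. Write $\cyc A \cap \cyc B = \{g_1, \dots, g_n\}$, and let $G(\rb E)$ denote the free fg Abelian group associated with $\rb E$ via the ``move along an edge'' operation $+$ described in the paragraph before the lemma.

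First I would show each $g_i$ generates a nontrivial cyclic subgroup $\langle g_i\rangle$ of $G(\rb E)$: following the argument in the preceding example, $g_i$ lies on both $\cyc A$ and $\cyc B$, so walking along the edges of either cycle starting at $g_i$ produces sequences of moves $k g_i$ that reach every vertex on that cycle. Next, for an arbitrary ribbon vertex $v \in \rb E$, I would invoke the minimal-path selection used just before the lemma: pick the path with the smallest number of edge moves between a generator $g_i$ and $v$. Since the cycles are path-connected and every edge issues from or terminates at some cycle vertex, such a path exists and, after the minimality choice is fixed, yields a unique expression $v = \sum_{i=1}^{n} k_i g_i$ with non-negative integer coefficients. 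This establishes that $\{g_1,\dots,g_n\}$ is a generating set.

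Then I would argue independence: if a nontrivial relation $\sum k_i g_i = 0$ held in $G(\rb E)$, then the corresponding concatenation of moves would collapse to the identity, contradicting the fact that each $g_i$ is a distinct vertex reached from no other $g_j$ by zero moves (the paths from distinct generators are inequivalent under the minimal-path choice). Hence the $g_i$ are free generators, and so the rank of $G(\rb E)$ is exactly $n$. By the definition of $\beta_\alpha$ as the rank of the free fg Abelian group, we conclude $\beta_\alpha(\rb E) = n$.

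The main obstacle is the independence step: the paper's development of the group operation is informal, so I would need to make explicit that the minimal-path convention really does rule out nontrivial relations among the $g_i$. Once that convention is adopted as part of the representation, uniqueness of the linear combination follows and the equality $\beta_\alpha(\rb E) = n$ is immediate from the definition of rank cited from \cite[\S 1.4, p. 24]{Munkres1984}.
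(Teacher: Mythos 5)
Your proposal follows essentially the same route as the paper's proof: each vertex in the intersection of the two cycles is taken as the generator of a cyclic group on $\rb E$ (every ribbon vertex being reachable by a sequence of moves from it), and the $n$ intersection vertices therefore furnish the $n$ generators of the free fg Abelian group representation, giving $\beta_{\alpha}(\rb E)=n$. The only difference is that you explicitly flag and attempt the independence step via the minimal-path convention, which the paper's proof passes over in silence; that is a fair refinement of the same argument rather than a different one.
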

\begin{proof}
Let $g$ be a vertex in the intersection of the cycles in ribbon $\rb E$.  Each cycle vertex $a\in \rb E$ can be written as a sequence of  $m$ moves from $g$ to $a$, {\em i.e.}, $a = mg$, since vertex $g$ in common to both cycles in $\rb E$. In effect, $g$ is the generator of a cyclic group on $\rb E$. For a ribbon with $n > 0$ points of intersection of its cycles, there are $n$ cyclic groups on the ribbon.  Hence, ribbon $\rb E$ has free fg Abelian group representation derived from the set of points in the nonempty intersection of its cycles.  This gives the desired result, namely, $\beta_{\alpha}(\rb E) = n$.
\end{proof}

\begin{example}
In Fig.~\ref{fig:rbNrvE3}, ribbon $\rb E'''$ in a CW complex $K'''$ is defined by a pair of intersecting cycles $\cyc A''',\cyc B'''$.  Assume that complex $K'''$ is equipped with the descriptive proximity $\dnear$ with $\Phi(\rb E''')$ equal to $\beta_{\alpha}(\rb E''')$  (the number of generators in the free fg Abelian group representation of $\rb E''$) = 3 (from Lemma~\ref{lem:bridgeGenerators}).  Similarly, let complex $K'$ in Fig.~\ref{fig:rbNrvE} be equipped with the descriptive proximity $\dnear$ with $\Phi(\rb E')$ equal to $\beta_{\alpha}(\rb E')$ = 3.  Then we have $\rb E'''\ \dnear\ \rb E'$, since
\[ 
\Phi(\rb E''') = \beta_{\alpha}(\rb E''') = \Phi(\rb E') = \beta_{\alpha}(\rb E').
\]
Hence, from Lemma~\ref{lemma:dP2converse}, $\rb E\ \dnear\ \rb E'\Rightarrow \rb E\ \dcap\ \rb E'\neq \emptyset$.
However, in Fig.~\ref{fig:rbE2}, $\rb E''\ \not{\dnear}\ \rb E'''$, since
\[ 
\Phi(\rb E'') = \beta_{\alpha}(\rb E'') \neq \Phi(\rb E''') = \beta_{\alpha}(\rb E''').
\ \mbox{\textcolor{blue}{\Squaresteel}}
\]
\end{example}

\begin{figure}[!ht]
\centering
\subfigure[Ribbon $\rb E_5$ with 1 bridge edge between cycles]
 {\label{fig:1BridgeEdge}
\begin{pspicture}
%[showgrid=true]
(-1.5,-0.5)(4.0,3.0)
\psframe[linewidth=0.75pt,linearc=0.25,cornersize=absolute,linecolor=blue](-1.25,-0.25)(3.4,3)
\psline*[linestyle=solid,linecolor=green!30]%
(0,0)(1,0.5)(2.0,0.0)(3.0,0.5)(3.0,1.5)(2.0,2.0)(1,1.5)(0,2)
(-1,1.5)(-1,0.5)(0,0)
% rb A ribbon outer boundary
\psline[linestyle=solid,linecolor=black]%
(0,0)(1,0.5)(2.0,0.0)(3.0,0.5)(3.0,1.5)(2.0,2.0)(1,1.5)(0,2)
(-1,1.5)(-1,0.5)(0,0)
\psline[linestyle=solid,linecolor=black]%% bridge edge
(1,1.5)(1,1.25)
% rb A ribbon outer boundary vertexes
\psdots[dotstyle=o,dotsize=2.2pt,linewidth=1.2pt,linecolor=black,fillcolor=gray!80]%
(0,0)(1,0.5)(2.0,0.0)(3.0,0.5)(3.0,1.5)(2.0,2.0)(1,1.5)(0,2)
(-1,1.5)(-1,0.5)(0,0)
\psline[linestyle=solid](1,1.5)(2,2)\psline[arrows=<->](-1,1.7)(-0.3,2.0)\psline[arrows=<->](0.2,2.05)(0.8,1.75)
% rb A ribbon inner boundary = bdy(cyc B)
\psline*[linestyle=solid,linecolor=white]%
(0,0.25)(1,0.75)(2.0,0.25)(2.5,0.5)(2.5,0.75)(2.0,1.35)(1,1.25)(0,1.5)
(-.55,1.25)(-.55,0.75)(0,0.25)
\psline[linestyle=solid,linecolor=black]%
(0,0.25)(1,0.75)(2.0,0.25)(2.5,0.5)(2.5,0.75)(2.0,1.35)(1,1.25)(0,1.5)
(-.55,1.25)(-.55,0.75)(0,0.25)
% rb A ribbon inner boundary vertexes
\psdots[dotstyle=o,dotsize=2.2pt,linewidth=1.2pt,linecolor=black,fillcolor=gray!80]%
(0,0.25)(1,0.75)(2.0,0.25)(2.5,0.5)(2.5,0.75)(2.0,1.35)(1,1.25)(0,1.5)
(-.55,1.25)(-.55,0.75)(0,0.25)
%\psdots[dotstyle=o,dotsize=2.5pt,linewidth=1.2pt,linecolor=black,fillcolor=red!80]% shared boundary vertex
%(3.0,0.5)
\psdots[dotstyle=o,dotsize=2.5pt,linewidth=1.2pt,linecolor=black,fillcolor=red!80]% bridge edge vertexes
(1,1.5)(1,1.25)
\rput(-1.0,2.75){\footnotesize $\boldsymbol{K_5}$}
\rput(1,1.72){\footnotesize $\boldsymbol{\arc{e_{_0}}}$}\rput(1,1.05){\footnotesize $\boldsymbol{\arc{e_{_1}}}$}
%\rput(3.2,0.6){\footnotesize $\boldsymbol{a''}$}
\rput(0.0,1.75){\footnotesize $\boldsymbol{rb E_5}$}
\rput(2.8,1.85){\footnotesize $\boldsymbol{cyc A_5}$}
\rput(2.5,1.25){\footnotesize $\boldsymbol{cyc B_5}$}
\end{pspicture}}\hfil
\subfigure[Ribbon $\rb E_6$ with 2 bridge edges between cycles]
 {\label{fig:2BridgeEdges}
\begin{pspicture}
%[showgrid=true]
(-1.5,-0.5)(4.0,3.0)
\psframe[linewidth=0.75pt,linearc=0.25,cornersize=absolute,linecolor=blue](-1.35,-0.25)(3.4,3)
\psline*[linestyle=solid,linecolor=green!30]%
(0,0)(1,0.5)(2.0,0.0)(3.0,0.5)(3.0,1.5)(2.0,2.0)(1,1.5)(0,2)
(-1,1.5)(-1,0.5)(0,0)
% rb A ribbon outer boundary
\psline[linestyle=solid,linecolor=black]%
(0,0)(1,0.5)(2.0,0.0)(3.0,0.5)(3.0,1.5)(2.0,2.0)(1,1.5)(0,2)
(-1,1.5)(-1,0.5)(0,0)
\psline[linestyle=solid,linecolor=black]% ribbon bridge edge
(1,1.5)(1,1.25)
\psline[linestyle=solid,linecolor=black]% ribbon bridge edge
(2.0,1.4)(2.0,2.0)
% rb A ribbon outer boundary vertexes
\psdots[dotstyle=o,dotsize=2.2pt,linewidth=1.2pt,linecolor=black,fillcolor=gray!80]%
(0,0)(1,0.5)(2.0,0.0)(3.0,0.5)(3.0,1.5)(2.0,2.0)(1,1.5)(0,2)
(-1,1.5)(-1,0.5)(0,0)
\psline[linestyle=solid](1,1.5)(2,2)\psline[arrows=<->](-1,1.7)(-0.3,2.0)\psline[arrows=<->](0.2,2.05)(0.8,1.75)
% rb A ribbon inner boundary = bdy(cyc B)
\psline*[linestyle=solid,linecolor=white]%
(0,0.25)(1,0.75)(2.0,0.25)(2.5,0.5)(2.5,1.25)(2.0,1.55)(1,1.25)(0,1.5)
(-.55,1.25)(-.55,0.75)(0,0.25)
\psline[linestyle=solid,linecolor=black]%
(0,0.25)(1,0.75)(2.0,0.25)(2.5,0.5)(2.5,1.25)(2.0,1.55)(1,1.25)(0,1.5)
(-.55,1.25)(-.55,0.75)(0,0.25) %(-.55,1.25)(-.55,0.75)(0,0.25)
% rb A ribbon inner boundary vertexes
\psdots[dotstyle=o,dotsize=2.2pt,linewidth=1.2pt,linecolor=black,fillcolor=gray!80]%
(0,0.25)(1,0.75)(2.0,0.25)(2.5,0.5)(2.5,1.25)(2.0,1.55)(1,1.25)(0,1.5)
(-.55,1.25)(-.55,0.75)(0,0.25)
%\psdots[dotstyle=o,dotsize=2.5pt,linewidth=1.2pt,linecolor=black,fillcolor=red!80]% shared boundary vertex
%(-1,1.5)(3.0,0.5)
\psdots[dotstyle=o,dotsize=2.5pt,linewidth=1.2pt,linecolor=black,fillcolor=red!80]% bridge edge vertexes
(1,1.5)(1,1.25)(2,2.0)(2.0,1.55)
\rput(-1.0,2.75){\footnotesize $\boldsymbol{K_6}$}
\rput(1,1.78){\footnotesize $\boldsymbol{\arc{g_{_0}}}$}\rput(1,1.05){\footnotesize $\boldsymbol{\arc{g_{_1}}}$}\rput(2,2.25){\footnotesize $\boldsymbol{\arc{g_{_2}}}$}\rput(2.0,1.3){\footnotesize $\boldsymbol{\arc{g_{_3}}}$}
\rput(-1.1,1.65){\footnotesize $\boldsymbol{a_1}$}
\rput(3.15,0.6){\footnotesize $\boldsymbol{a_2}$}
\rput(0.0,1.655){\footnotesize $\boldsymbol{rb E_6}$}
\rput(2.9,1.85){\footnotesize $\boldsymbol{cyc A_6}$}
\rput(2.52,0.8){\footnotesize $\boldsymbol{cyc B_6}$}
\end{pspicture}}
\caption[]{Ribbon bridge edges}
\label{fig:RibbonBridges}
\end{figure}

\subsection{Ribbon Bridge Edges}
Another source of generators of free fg Abelian group representations of ribbons are the endpoints of bridge edges attached between ribbon cycles. A ribbon \emph{bridge edge} is an edge attached between ribbon cycle vertexes.

\begin{example}
In Fig.~\ref{fig:1BridgeEdge}, vertex $\arc{e_{_0}}$ on cycle $\cyc A_5$ and vertex $\arc{e_{_1}}$ on cycle $\cyc B_5$ are the endpoints of bridge edge $\arc{e_{_0},e_{_1}}$ attached between the cycles on ribbon $\rb E_5$.  Similarly, edges $\arc{g_{_0},g_{_1}},\arc{g_{_2},g_{_3}}$ are a pair of bridge segments attached between cycles $\cyc A_6, \cyc A_6$ on ribbon $\rb E_6$ in Fig.~\ref{fig:2BridgeEdges}.
\textcolor{blue}{\Squaresteel}
\end{example}

In a ribbon that contains a bridge edge vertex $\arc{g}$, every ribbon vertex $v$ can be written a linear combination written of the vertexes in the path that starts with $\arc{g}$ and ends with $v$.  In effect, every ribbon bridge segment vertex $\arc{e}$ is a generator 
$\langle\arc{e}\rangle$ of a cyclic group.  Hence, a ribbon $\rb E$ with a single bridge edge like the one in Fig.~\ref{fig:1BridgeEdge} can be represented by a free fg Abelian group with two generators, {\em i.e.}, $\beta_{\alpha}(\rb E) = 2$.  In general, we have the following result.

\begin{lemma}\label{lem:bridgeSegmentVertexGenerators}
Let $\rb E$ in a CW complex $K$ have $k$ bridge segments.  Then
$\beta_{\alpha}(\rb E) = 2k$.
\end{lemma}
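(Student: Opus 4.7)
The plan is to mimic the proof of Lemma~\ref{lem:bridgeGenerators}, with bridge-edge endpoints playing the role that intersection vertices played there. The key observation, already developed in the paragraph preceding the statement, is that each endpoint $\arc{e}$ of a bridge edge is itself a generator $\langle \arc{e}\rangle$ of a cyclic group on $\rb E$: starting from $\arc{e}$, one can traverse the bridge to reach the opposite cycle and then walk around either cycle, so every ribbon vertex $v$ is expressible as a sequence of moves $v = m\arc{e}$ for some $m\ge 0$.

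First I would fix $k$ bridge edges $\arc{e_{0}^{(i)},e_{1}^{(i)}}$ for $i=1,\dots,k$ and list their $2k$ endpoints. For each endpoint $\arc{e_{j}^{(i)}}$, I would invoke the observation above to conclude that $\langle \arc{e_{j}^{(i)}}\rangle$ is a cyclic subgroup on $\rb E$. Next I would assemble these $2k$ cyclic groups into a free fg Abelian group representation of $\rb E$, following the same recipe as in Lemma~\ref{lem:bridgeGenerators}: by choosing, for each ribbon vertex $v$, the path with the minimal number of moves from a generator, we get a unique linear combination of the $2k$ generators, which is exactly the statement that the representation is free. This pins down the rank $\beta_{\alpha}(\rb E) = 2k$.

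The main obstacle, and the step I would be most careful with, is the uniqueness/independence of the $2k$ generators. It is clear that each endpoint gives \emph{a} generator, but one has to rule out the possibility that one of them is superfluous (for example, that the two endpoints of a single bridge edge already determine each other and thus collapse to a single generator). The argument is that the two endpoints of a bridge lie on different cycles, so neither can be obtained as an integer multiple of the other without using the bridge move itself, which is precisely what the choice of minimal-move paths encodes as an independent generator direction. Once this independence is spelled out in the same style as in Lemma~\ref{lem:bridgeGenerators}, the rest of the proof is immediate and yields $\beta_{\alpha}(\rb E) = 2k$.
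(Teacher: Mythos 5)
Your proposal is correct and follows exactly the route the paper intends: the paper in fact states Lemma~\ref{lem:bridgeSegmentVertexGenerators} with no proof environment at all, relying on the paragraph immediately preceding it (each bridge-edge endpoint $\arc{e}$ is a generator $\langle\arc{e}\rangle$ of a cyclic group, so $k$ bridge edges contribute $2k$ generators) together with the template of the proof of Lemma~\ref{lem:bridgeGenerators}. Your additional attention to the independence of the two endpoints of a single bridge edge addresses a point the paper silently glosses over, but it is an elaboration of the same argument rather than a different approach.
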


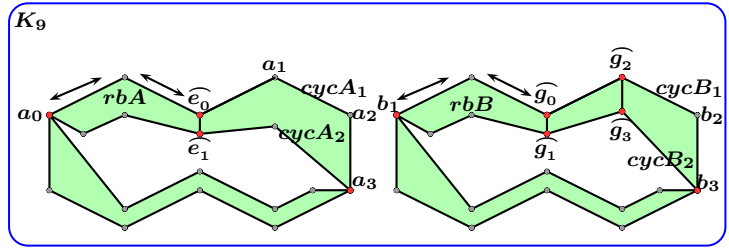
\begin{figure}[!ht]
\centering
%\subfigure[Ribbon $\rb E_5$ with 1 bridge edge and intersecting cycles with 2 common vertexes]
 %{\label{fig:1BridgeEdge1cyclesIntersection}
\begin{pspicture}
%[showgrid=true]
(-1.8,-0.5)(3.0,3.0)
\psframe[linewidth=0.75pt,linearc=0.25,cornersize=absolute,linecolor=blue](-1.55,-0.25)(8,3.0)
\psline*[linestyle=solid,linecolor=green!30]%
(0,0)(1,0.5)(2.0,0.0)(3.0,0.5)(3.0,1.5)(2.0,2.0)(1,1.5)(0,2)
(-1,1.5)(-1,0.5)(0,0)
% rb A ribbon outer boundary
\psline[linestyle=solid,linecolor=black]%
(0,0)(1,0.5)(2.0,0.0)(3.0,0.5)(3.0,1.5)(2.0,2.0)(1,1.5)(0,2)
(-1,1.5)(-1,0.5)(0,0)
\psline[linestyle=solid,linecolor=black]%% bridge edge
(1,1.5)(1,1.25)
% rb A ribbon outer boundary vertexes
\psdots[dotstyle=o,dotsize=2.2pt,linewidth=1.2pt,linecolor=black,fillcolor=gray!80]%
(0,0)(1,0.5)(2.0,0.0)(3.0,0.5)(3.0,1.5)(2.0,2.0)(1,1.5)(0,2)
(-1,1.5)(-1,0.5)(0,0)
\psline[linestyle=solid](1,1.5)(2,2)\psline[arrows=<->](-1,1.7)(-0.3,2.0)\psline[arrows=<->](0.2,2.05)(0.8,1.75)
% rb A ribbon inner boundary = bdy(cyc B)
\psline*[linestyle=solid,linecolor=white]%
(0,0.25)(1,0.75)(2.0,0.25)(2.5,0.5)(3.0,0.5)(2.0,1.35)(1,1.25)(0,1.5)
(-.55,1.25)(-1,1.5)(0,0.25)
\psline[linestyle=solid,linecolor=black]%
(0,0.25)(1,0.75)(2.0,0.25)(2.5,0.5)(3.0,0.5)(2.0,1.35)(1,1.25)(0,1.5)
(-.55,1.25)(-1,1.5)(0,0.25)
% rb A ribbon inner boundary vertexes
\psdots[dotstyle=o,dotsize=2.2pt,linewidth=1.2pt,linecolor=black,fillcolor=gray!80]%
(0,0.25)(1,0.75)(2.0,0.25)(2.5,0.5)(3.0,0.5)(2.0,1.35)(1,1.25)(0,1.5)
(-.55,1.25)(-1,1.5)(0,0.25)
%\psdots[dotstyle=o,dotsize=2.5pt,linewidth=1.2pt,linecolor=black,fillcolor=red!80]% shared boundary vertex
%(3.0,0.5)
\psdots[dotstyle=o,dotsize=2.5pt,linewidth=1.2pt,linecolor=black,fillcolor=red!80]% bridge edge vertexes
(1,1.5)(1,1.25)(3.0,0.5)(-1,1.5)
\rput(-1.25,2.75){\footnotesize $\boldsymbol{K_9}$}
\rput(1,1.72){\footnotesize $\boldsymbol{\arc{e_{_0}}}$}\rput(1,1.05){\footnotesize $\boldsymbol{\arc{e_{_1}}}$}
\rput(-1.25,1.5){\footnotesize $\boldsymbol{a_0}$}
\rput(2.0,2.15){\footnotesize $\boldsymbol{a_1}$}
\rput(3.2,1.5){\footnotesize $\boldsymbol{a_2}$}
\rput(3.2,0.6){\footnotesize $\boldsymbol{a_3}$}
%\rput(0,0.25){\footnotesize $\boldsymbol{b_1}$}
%\rput(-0.35,1.15){\footnotesize $\boldsymbol{b_2}$}
%\rput(-0.35,0.75){\footnotesize $\boldsymbol{b_3}$}
\rput(0.0,1.75){\footnotesize $\boldsymbol{rb A}$}
\rput(2.8,1.85){\footnotesize $\boldsymbol{cyc A_1}$}
\rput(2.5,1.25){\footnotesize $\boldsymbol{cyc A_2}$}
\end{pspicture}
%\subfigure[Ribbon $\rb E_6$ with 2 bridge edges and intersecting cycles with 2 common vertexes]
 %{\label{fig:2BridgeEdges2cyclesIntersection}
\begin{pspicture}
%[showgrid=true]
(-1.5,-0.5)(4.0,4.0)
%\psframe[linewidth=0.75pt,linearc=0.25,cornersize=absolute,linecolor=blue](-1.35,-0.25)(3.8,3)
\psline*[linestyle=solid,linecolor=green!30]%
(0,0)(1,0.5)(2.0,0.0)(3.0,0.5)(3.0,1.5)(2.0,2.0)(1,1.5)(0,2)
(-1,1.5)(-1,0.5)(0,0)
% rb A ribbon outer boundary
\psline[linestyle=solid,linecolor=black]%
(0,0)(1,0.5)(2.0,0.0)(3.0,0.5)(3.0,1.5)(2.0,2.0)(1,1.5)(0,2)
(-1,1.5)(-1,0.5)(0,0)
\psline[linestyle=solid,linecolor=black]% ribbon bridge edge
(1,1.5)(1,1.25)
\psline[linestyle=solid,linecolor=black]% ribbon bridge edge
(2.0,1.4)(2.0,2.0)
% rb A ribbon outer boundary vertexes
\psdots[dotstyle=o,dotsize=2.2pt,linewidth=1.2pt,linecolor=black,fillcolor=gray!80]%
(0,0)(1,0.5)(2.0,0.0)(3.0,0.5)(3.0,1.5)(2.0,2.0)(1,1.5)(0,2)
(-1,1.5)(-1,0.5)(0,0)
\psline[linestyle=solid](1,1.5)(2,2)\psline[arrows=<->](-1,1.7)(-0.3,2.0)\psline[arrows=<->](0.2,2.05)(0.8,1.75)
% rb A ribbon inner boundary = bdy(cyc B)
\psline*[linestyle=solid,linecolor=white]%
(0,0.25)(1,0.75)(2.0,0.25)(2.5,0.5)(3.0,0.5)(2.0,1.55)(1,1.25)(0,1.5)
(-.55,1.25)(-1,1.5)(0,0.25)
\psline[linestyle=solid,linecolor=black]%
(0,0.25)(1,0.75)(2.0,0.25)(2.5,0.5)(3.0,0.5)(2.0,1.55)(1,1.25)(0,1.5)
(-.55,1.25)(-1,1.5)(0,0.25) %(-.55,1.25)(-.55,0.75)(0,0.25)
% rb A ribbon inner boundary vertexes
\psdots[dotstyle=o,dotsize=2.2pt,linewidth=1.2pt,linecolor=black,fillcolor=gray!80]%
(0,0.25)(1,0.75)(2.0,0.25)(2.5,0.5)(3.0,0.5)(2.0,1.55)(1,1.25)(0,1.5)
(-.55,1.25)(-1,1.5)(0,0.25)
%\psdots[dotstyle=o,dotsize=2.5pt,linewidth=1.2pt,linecolor=black,fillcolor=red!80]% shared boundary vertex
%(-1,1.5)(3.0,0.5)
\psdots[dotstyle=o,dotsize=2.5pt,linewidth=1.2pt,linecolor=black,fillcolor=red!80]% bridge edge vertexes
(1,1.5)(1,1.25)(2,2.0)(2.0,1.55)(3.0,0.5)(-1,1.5)
%\rput(-1.0,2.75){\footnotesize $\boldsymbol{K_8}$}
\rput(1,1.78){\footnotesize $\boldsymbol{\arc{g_{_0}}}$}\rput(1,1.05){\footnotesize $\boldsymbol{\arc{g_{_1}}}$}\rput(2,2.25){\footnotesize $\boldsymbol{\arc{g_{_2}}}$}\rput(2.0,1.3){\footnotesize $\boldsymbol{\arc{g_{_3}}}$}
\rput(-1.1,1.65){\footnotesize $\boldsymbol{b_1}$}
\rput(3.2,1.5){\footnotesize $\boldsymbol{b_2}$}
\rput(3.15,0.6){\footnotesize $\boldsymbol{b_3}$}
\rput(0.0,1.655){\footnotesize $\boldsymbol{rb B}$}
\rput(2.9,1.85){\footnotesize $\boldsymbol{cyc B_1}$}
\rput(2.52,0.9){\footnotesize $\boldsymbol{cyc B_2}$}
\end{pspicture}
\caption[]{Ribbon complex}
\label{fig:RibbonBridgesAndIntersections}
\end{figure}

It is also possible for a ribbon to have a combination of bridge segments attached between its cycles, which intersect at one of more points.

\begin{theorem}\label{thm:bridgesAndIntersectingCycles}
Let $\rb E$ be a ribbon with intersecting cycles having $n$ vertexes in the intersection and with $k$ bridge edges.  Then
\[
\beta_{\alpha}(\rb E) = 2k + n.
\]  
\end{theorem}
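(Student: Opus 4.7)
The plan is to decompose the claim into the two generator-counting lemmas already established and then argue that the two sets of generators are jointly independent and jointly sufficient to describe every vertex in $\rb E$, so that the Betti number $\beta_{\alpha}$, being the rank of the free fg Abelian group representation, is additive in the two contributions.

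First I would fix notation: let $I = \{g_{1},\dots,g_{n}\}$ be the set of vertexes in $\cyc A\cap \cyc B$ and let $\{\arc{e_{i,0},e_{i,1}}\}_{i=1}^{k}$ be the bridge edges, with endpoint set $B = \{\arc{e_{1,0}},\arc{e_{1,1}},\dots,\arc{e_{k,0}},\arc{e_{k,1}}\}$. By Lemma~\ref{lem:bridgeGenerators}, each $g_{i}\in I$ is the generator $\langle g_{i}\rangle$ of a cyclic group on $\rb E$, so each ribbon vertex reachable along a cycle-path from some $g_{i}$ is expressible as an integer multiple $m\,g_{i}$. By Lemma~\ref{lem:bridgeSegmentVertexGenerators}, each bridge endpoint $\arc{e_{i,j}}\in B$ is likewise a generator $\langle \arc{e_{i,j}}\rangle$ of a cyclic group on $\rb E$, because any ribbon vertex $v$ on the cycle carrying $\arc{e_{i,j}}$ is reached by a sequence of moves from $\arc{e_{i,j}}$ to $v$.

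Next I would verify that the combined set $I\cup B$ serves as a set of free generators. For existence of a representation, I pick for each vertex $v\in \rb E$ a minimal-length path from some element of $I\cup B$ to $v$ (as was done in the running example after Lemma~\ref{lem:bridgeGenerators}); this yields a linear combination of one generator in $I\cup B$ expressing $v$. For freeness (independence), I argue that no generator in $I\cup B$ can be written as a nontrivial linear combination of the others: an intersection vertex $g_{i}\in I$ lies on \emph{both} cycles so it cannot be produced by cycle-moves from any bridge endpoint on a single cycle, and distinct bridge endpoints correspond to disjoint bridge segments so each contributes an independent cyclic direction. Consequently, the free fg Abelian group representing $\rb E$ has generating set of cardinality $|I|+|B| = n + 2k$.

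Finally, I invoke the definition $\beta_{\alpha}(\rb E)=\operatorname{rank}$ of this free fg Abelian group~\cite[\S 1.4, p. 24]{Munkres1984} to conclude $\beta_{\alpha}(\rb E)=2k+n$. The main obstacle I anticipate is the independence step: one must rule out a ``short-cut'' in which an intersection vertex could be expressed as a sum of bridge-endpoint generators (or vice versa). This is handled by noting that each generator's cyclic group is defined by moves restricted to a single cycle containing that generator, so distinct generators on different cycles or different bridges contribute independent $\mathbb{Z}$-summands. Given the two prior lemmas this is essentially bookkeeping, and the theorem then follows directly.
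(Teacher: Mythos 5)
Your proposal is correct and takes essentially the same route as the paper, whose entire proof is ``Immediate from Lemma~\ref{lem:bridgeSegmentVertexGenerators} and Lemma~\ref{lem:bridgeGenerators}'' --- i.e., exactly your decomposition into $n$ intersection-vertex generators and $2k$ bridge-endpoint generators. Your added independence argument merely fills in a step the paper treats as immediate, so no further comparison is needed.
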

\begin{proof}
Immediate from Lemma~\ref{lem:bridgeSegmentVertexGenerators} and Lemma~\ref{lem:bridgeGenerators}.
\end{proof}

\begin{example}
A sample ribbon complex containing ribbons $\rb A, \rb B$ in a CW space $K_9$ is shown in Fig.~\ref{fig:RibbonBridgesAndIntersections}. $\rb A$ has one bridge edge $\arc{e_0,e_1}$ with endpoints $e_0,e_1$ and with vertexes $a_0,a_3$ in the intersection of cycles $\cyc A_1,\cyc A_2$ and $\rb B$ has two bridge edges $\arc{g_0,g_1}, \arc{g_2,g_3}$ with  vertexes $b_1,b_3$ in the intersection of cycles $\cyc B_1,\cyc B_2$.  From Theorem~\ref{thm:bridgesAndIntersectingCycles}, we have $\beta_{\alpha}(\rb A) = 4$ and $\beta_{\alpha}(\rb B) = 6$.
\textcolor{blue}{\Squaresteel}
\end{example}

\subsection{Proximally Continuous}
%Let $f(A),f(B)$ map nonempty sets $A, B$ in a space $X$ to their corresponding descriptions $\Phi(A),\Phi(B)$, $A,B \in 2^X$. \\
%\colorbox{yellow}{In my opinion, $f$ could be any map between $(X,\delta_1)$ and $(Y,\delta_2)$. $f$ doesn't need to map $A$ to $\Phi(A)$.}\\
%\colorbox{green}{Yes, very good.  I agree.}\\

Let $(X,\delta_1)$ and $(Y,\delta_2)$ be two \v{C}ech proximity spaces. Then a map $f: (X,\delta_1)\to (Y,\delta_2)$ is \emph{proximal continuous}, provided  $A\ \delta_1\ B$ implies $f(A)\ \delta_2\ f(B)$, \emph{i.e.}, $f(A)\ \delta_2\ f(B)$, provided $f(A)\ \cap\ f(B)\neq \emptyset$ for $A, B\in 2^X$~\cite[\S 1.4]{Naimpally70}.  In general, a proximal continuous function preserves the nearness of pairs of sets~\cite[\S 1.7,p. 16]{Naimpally2013}. Further, $f$ is a \emph{proximal isomorphism}, provided $f$ is proximal continuous with a proximal continuous inverse $f^{-1}$.\\

\begin{definition}
	Let $(X,\delta)$ be a \v{C}ech proximity space and $f: (X, \delta) \to (X, \delta)$ a proximal continuous map. A set $A \in 2^X$ is said to be invariant with respect to $f$, provided $f(A)\subseteq A$. 
\end{definition}

Notice that if $A$ is an invariant set with respect to $f$, then $f^n(A)\subseteq A$ for all positive integer $n$.

\begin{theorem}
	Let $(X,\delta)$ be a \v{C}ech proximity space and  $f: (X, \delta) \to (X, \delta)$ a proximal continuous map. If $\{ A_i\}_{i \in I} \subseteq 2^X$ is a collection of invariant sets with respect to  $f$, then
	\begin{compactenum}
		\item[i)] 	$\cup_{i\in I} A_i$ is  an invariant set with respect to $f$, and 
		\item[ii)] $\cap_{i\in I} A_i$ is  an invariant set with respect to $f$. 
	\end{compactenum}
\end{theorem}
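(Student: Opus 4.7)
The plan is to verify both statements by a direct set-theoretic argument, using only the defining property that $f(A_i)\subseteq A_i$ for each $i\in I$ and the elementary behavior of the image operator $f(\cdot)$ under unions and intersections. No proximity axioms are actually needed for the conclusion, since invariance is a purely set-containment property; proximal continuity of $f$ is used only to ensure that $f$ is a well-defined map on $2^X$ in the sense required by the hypothesis.

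For part (i), I would begin by recalling the standard identity $f\bigl(\bigcup_{i\in I} A_i\bigr)=\bigcup_{i\in I} f(A_i)$, which holds for any point-valued function and any indexed family of subsets. Combining this with $f(A_i)\subseteq A_i$ for each $i$ gives
\[
f\Bigl(\bigcup_{i\in I} A_i\Bigr)=\bigcup_{i\in I} f(A_i)\subseteq \bigcup_{i\in I} A_i,
\]
which is precisely the invariance of the union.

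For part (ii), the analogue of the union identity is only a one-sided inclusion, namely $f\bigl(\bigcap_{i\in I} A_i\bigr)\subseteq \bigcap_{i\in I} f(A_i)$; this is the one direction that always holds. I would verify it by picking $y\in f(\bigcap_{i\in I} A_i)$, writing $y=f(x)$ with $x\in A_i$ for every $i$, and concluding $y\in f(A_i)$ for every $i$. Then, using $f(A_i)\subseteq A_i$ once more,
\[
f\Bigl(\bigcap_{i\in I} A_i\Bigr)\subseteq \bigcap_{i\in I} f(A_i)\subseteq \bigcap_{i\in I} A_i,
\]
which gives the desired invariance.

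There is essentially no obstacle here; the only subtlety worth flagging is that in part (ii) one must resist the temptation to claim equality $f(\bigcap A_i)=\bigcap f(A_i)$, which fails in general for non-injective $f$, but the inclusion is all that is needed. The remark immediately preceding the theorem, that iterates $f^n(A)$ remain in $A$ whenever $A$ is invariant, plays no role in the proof and is simply a corollary of invariance combined with induction.
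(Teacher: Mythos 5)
Your argument is correct and coincides with the paper's own proof, which uses exactly the identity $f(\cup_{i\in I} A_i)=\cup_{i\in I} f(A_i)$ for part (i) and the one-sided inclusion $f(\cap_{i\in I} A_i)\subseteq \cap_{i\in I} f(A_i)$ for part (ii), combined with $f(A_i)\subseteq A_i$. Your version merely spells out the verification of the intersection inclusion in more detail; no further comparison is needed.
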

\begin{proof}
From our assumption, we have $f(A_i)\subseteq A_i$ for all $i\in I$ so that
\begin{compactenum}
 	\item[i)] $f(\cup_{i\in I} A_i)= \cup_{i\in I} f(A_i) \subseteq \cup_{i\in I} A_i$, and 
 	\item[ii)] $f(\cap_{i\in I} A_i) \subseteq \cap_{i\in I} f(A_i) \subseteq \cap_{i\in I} A_i$.
	\end{compactenum}
\end{proof}

\noindent Theorem~\ref{thm:LodatoProximity} holds, provided $\delta$ is a Lodato proximity~\cite{Lodato1962}.

\begin{theorem}\label{thm:LodatoProximity}
	Let $(X,\delta)$ be a \v{C}ech proximity space and  $f: (X, \delta) \to (X, \delta)$ a proximal continuous map. If $A \in 2^X$ is invariant with respect to $f$ then $\cl A$ is also invariant with respect to $f$.
\end{theorem}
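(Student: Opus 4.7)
The plan is to show that every $x \in \cl A$ is mapped by $f$ into $\cl A$, which yields $f(\cl A) \subseteq \cl A$. My main tool is the standard proximity-based characterisation of closure, namely $x \in \cl A$ if and only if $\{x\}\ \near\ A$. This identification behaves as a genuine (Kuratowski) closure operator precisely when $\near$ satisfies the Lodato axiom, which is exactly why the hypothesis must be strengthened from \v{C}ech to Lodato.

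First, I pick $x \in \cl A$, so that $\{x\}\ \near\ A$. Because $f$ is proximally continuous, this implies $\{f(x)\} = f(\{x\})\ \near\ f(A)$. Invariance of $A$ gives $f(A) \subseteq A$, and combining this with the monotonicity of $\near$ (the standard property $D\ \near\ B$ and $B \subseteq C$ imply $D\ \near\ C$, which follows from the \v{C}ech axioms together with $(B \cup C) = C$ when $B \subseteq C$) produces $\{f(x)\}\ \near\ A$. Applying the closure characterisation in the reverse direction gives $f(x) \in \cl A$, which is the required containment.

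The heart of the argument is therefore short; the main obstacle is not computational but conceptual, namely to make explicit where the Lodato axiom enters. Without Lodato the operator $A \mapsto \{x : \{x\}\ \near\ A\}$ is only a \v{C}ech closure operator and need not be idempotent, so the symbol $\cl A$ would not carry the full topological meaning the statement tacitly relies on; with Lodato, this operator is a genuine Kuratowski closure and the equivalence $x \in \cl A \Leftrightarrow \{x\}\ \near\ A$ can be used safely in both directions. Once that point is flagged, the remainder is a direct chain of implications from proximal continuity, invariance, and monotonicity of $\near$, with no further obstruction.
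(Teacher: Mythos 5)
Your argument is correct and coincides with the proof the paper actually writes out for the descriptive analogue of this statement (the theorem on $\cl_{\delta_{\Phi}}A$): take $x$ with $x\ \near\ A$, push it through proximal continuity to $f(x)\ \near\ f(A)$, use $f(A)\subseteq A$ to get $f(x)\ \near\ A$, and conclude $f(x)\in\cl A$; for Theorem~\ref{thm:LodatoProximity} itself the paper gives no proof, only the preceding remark that it holds when $\delta$ is Lodato, and your discussion of where the Lodato axiom matters (idempotence of the induced closure, so that $\cl$ is a genuine Kuratowski closure) is a reasonable reading of that remark. The one step to flag is monotonicity: axiom ({\bf P}.3) as listed in the paper is only the implication $A\ \near\ (B\cup C)\Rightarrow A\ \near\ B$ or $A\ \near\ C$, so passing from $\{f(x)\}\ \near\ f(A)$ and $f(A)\subseteq A$ to $\{f(x)\}\ \near\ A$ uses the converse (union) direction, which is standard in \v{C}ech's own formulation but not literally among the axioms the paper states.
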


\begin{definition}
	Let $(X,\delta)$ be a  \v{C}ech proximity space, $A\in 2^X$, and  $f : (X,\delta)\to (X,\delta)$ a \emph{proximal continuous map}.
\begin{compactenum}[(i)]
\item $A$ is  a fixed subset of $f$, provided  $f(A)=A$.
\item $A$ is an \emph{eventual fixed subset} of $f$, provided $A$ is not a fixed subset while $f^n(A)$ is a temporally constrained fixed set for some $n$, {\em i.e.}, $f^n(A)$ occurs at the end of a period of time as a result of some event ({\em cf.}~\cite[\S 8.1,p. 240]{AdamsFranzosa:2007}).    	 
\item $A$ is an \emph{almost fixed subset} of $f$, provided  $f(A)=A$ or $A \ \delta \ f(A)$~\cite{Boxer:2019, BoxKarLop:2016}. \\
\end{compactenum}
\end{definition}

\section{Descriptive Proximally Continuous Maps}
Let $(X,\delta_{\Phi_1})$ and $(Y,\delta_{\Phi_2})$  be  descriptive proximity spaces with  probe functions $\Phi_1: X \to \mathbb{R}^n$, $\Phi_2: Y \to \mathbb{R}^n$, and  $A,B \in 2^X$. Then a map $f : (X,\delta_{\Phi_1})\to (Y,\delta_{\Phi_2})$ is said to be \emph{descriptive proximally continuous}, provided $A\ \delta_{\Phi_1}\ B$ implies $f(A)\ \delta_{\Phi_2}\ f(B)$, \emph{i.e.}, $f(A)\ \delta_{\Phi_2} \ f(B)$, provided $f(A)\ \dcap \ f(B)\neq \emptyset$. Further $f$ is a \emph{descriptive proximal  isomorphism}, provided $f$ and its inverse $f^{-1}$  are  descriptively proximally continuous. \\

%Let $(X,\delta_{\Phi})$ be a \v{Cech} descriptive proximity space with a probe function $\Phi: X \to \mathbb{R}^n$. For a proximal descriptive continuous map $f: (X,\delta_{\Phi}) \to (X,\delta_{\Phi})$, define $\Phi_{f,t}(A):=\Phi(f^t(A))$ for $A\in 2^X$. In that case $\Phi_{f,t}(A)$ is the description of $A$ at time $t$ under $f$. For convenience, $\Phi_{f,0}(A)=\Phi(A)$. \\
%\colorbox{yellow}{I thought that it would be fine  to use these notations but I'm not anymore.} \\
%\colorbox{yellow}{It makes the text difficult to read. What do you think about these notations?} \\
%\colorbox{yellow}{According to your suggestions, I'll delete and re-edit Definition~\ref{def:desfixed}} \\
%\\
%\colorbox{green}{Yes, I agree that the revised notations are definitely an improvement.}\\ 
%\colorbox{green}{Please revise Def. ~\ref{def:desfixed}, accordingly.}\\

%\colorbox{yellow}{The following definition and two theorems are new.}

\begin{definition}
	Let $(X,\delta_{\Phi})$ be a descriptive  \v{C}ech proximity space and $f: (X, \delta_{\Phi}) \to (X, \delta_{\Phi})$ a descriptive proximally continuous map. A set $A \in 2^X$ is said to be descriptively invariant with respect to $f$, provided $\Phi(f(A))\subseteq \Phi(A)$. 
\end{definition}

Notice that if $A$ is a descriptively invariant set with respect to $f$, then $\Phi(f^n(A))\subseteq \Phi(A)$ for all positive integer $n$.

\begin{theorem}
	Let $(X, \delta_{\Phi})$ be a descriptive \v{C}ech proximity space and  $f: (X, \delta_{\Phi}) \to (X, \delta_{\Phi})$ a proximal descriptive continuous map. If $\{ A_i\}_{i \in I} \subseteq 2^X$ is a collection of descriptively invariant sets with respect to $f$, then
	\begin{compactenum}
		\item[i)] 	$\cup_{i\in I} A_i$ is   descriptively invariant with respect to  $f$, and 
		\item[ii)] $\cap_{i\in I} A_i$ is   descriptively invariant  with respect to  $f$. 
	\end{compactenum}
\end{theorem}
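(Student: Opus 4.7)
The plan is to lift the two-line proof of the preceding non-descriptive theorem through the probe function $\Phi$, which I treat as operating pointwise on any subset $S\subseteq X$ via $\Phi(S)=\{\Phi(x):x\in S\}$. With this convention one has the familiar compatibilities $\Phi(\bigcup_i S_i)=\bigcup_i \Phi(S_i)$ and $\Phi(\bigcap_i S_i)\subseteq \bigcap_i \Phi(S_i)$, together with monotonicity $S\subseteq T\Rightarrow \Phi(S)\subseteq \Phi(T)$, while the image identities $f(\bigcup_i A_i)=\bigcup_i f(A_i)$ and $f(\bigcap_i A_i)\subseteq \bigcap_i f(A_i)$ are standard for any set-theoretic map.

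For part (i), I would write
\[
\Phi\bigl(f\bigl(\textstyle\bigcup_i A_i\bigr)\bigr)=\Phi\bigl(\textstyle\bigcup_i f(A_i)\bigr)=\textstyle\bigcup_i \Phi(f(A_i))\subseteq \textstyle\bigcup_i \Phi(A_i)=\Phi\bigl(\textstyle\bigcup_i A_i\bigr),
\]
where the two outer equalities are the union compatibilities for $f$ and for $\Phi$ respectively, and the single inclusion is the descriptive invariance hypothesis $\Phi(f(A_i))\subseteq \Phi(A_i)$ applied termwise and then unioned.

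For part (ii), the parallel chain is
\[
\Phi\bigl(f\bigl(\textstyle\bigcap_i A_i\bigr)\bigr)\subseteq \Phi\bigl(\textstyle\bigcap_i f(A_i)\bigr)\subseteq \textstyle\bigcap_i \Phi(f(A_i))\subseteq \textstyle\bigcap_i \Phi(A_i),
\]
using monotonicity of $\Phi$ on $f(\bigcap_i A_i)\subseteq \bigcap_i f(A_i)$, then pushing $\Phi$ past the intersection, then applying the hypothesis termwise.

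The one step requiring care is the closing identification $\bigcap_i \Phi(A_i)=\Phi(\bigcap_i A_i)$ needed to finish part (ii). In raw set theory only the inclusion $\Phi(\bigcap_i A_i)\subseteq \bigcap_i \Phi(A_i)$ is forced; the reverse can fail when distinct points in different $A_i$ happen to share a description under $\Phi$. Under the standing convention of the paper, consistent with the definition of the descriptive intersection $\dcap$ and with how $\Phi(A)\cap\Phi(B)$ is read in Lemma~\ref{lemma:dP2converse}, this identification is the intended reading. Otherwise the conclusion of (ii) would be stated in the slightly weaker but equally natural form $\Phi(f(\bigcap_i A_i))\subseteq \bigcap_i \Phi(A_i)$, which is the right descriptive analogue of invariance for $\bigcap_i A_i$. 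This is the only genuine point of the argument; everything else is the non-descriptive proof written with $\Phi$ applied throughout.
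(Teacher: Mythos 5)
Your proof is correct and follows essentially the same route as the paper's: both parts are the identical chains of inclusions obtained by pushing $\Phi$ through the standard image identities for $f$ and then applying the termwise hypothesis $\Phi(f(A_i))\subseteq\Phi(A_i)$. The subtlety you flag at the close of part (ii) is real and is present in the paper's own proof as well, which likewise terminates at $\bigcap_{i\in I}\Phi(A_i)$ rather than at $\Phi\bigl(\bigcap_{i\in I}A_i\bigr)$; since in general only $\Phi\bigl(\bigcap_{i\in I}A_i\bigr)\subseteq\bigcap_{i\in I}\Phi(A_i)$ holds, either the convention identifying the two or your weaker reformulation of the conclusion is needed to finish that case, and your explicit acknowledgment of this is the only point at which you are more careful than the source.
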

\begin{proof}
	From our assumption, we have $\Phi(f(A_i))\subseteq \Phi(A_i)$ for all $i\in I$ so that
	
	\begin{compactenum}
		\item[i)] \begin{align*}
		f(\cup_{i\in I} A_i) &= \cup_{i\in I} f(A_i) \\
		\Phi(f(\cup_{i\in I} A_i)) & = \Phi(\cup_{i\in I} f(A_i)) \\
		&=\cup_{i\in I} \Phi (f(A_i)) \\
		& \subseteq \cup_{i\in I} \Phi(A_i)
		\end{align*}
		and 
		
		\item[ii)] \begin{align*}
		f(\cap_{i\in I} A_i) &\subseteq  \cap_{i\in I} f(A_i) \\
		\Phi(f(\cap_{i\in I} A_i)) &\subseteq \Phi(\cap_{i\in I} f(A_i)) \\
		&\subseteq \cap_{i\in I} \Phi (f(A_i)) \\
		& \subseteq \cap_{i\in I} \Phi(A_i)
		\end{align*}
	\end{compactenum}
\end{proof}

\begin{theorem}
	Let $(X, \delta_{\Phi})$ be a descriptive \v{C}ech proximity space and  $f: (X, \delta_{\Phi}) \to (X, \delta_{\Phi})$ a descriptive proximally continuous map. If $A \in 2^X$ is  descriptively invariant with respect to $f$ then $\cl_{\delta_{\Phi}} A$ is also  descriptively invariant with respect to with respect to $f$.
\end{theorem}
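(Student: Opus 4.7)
The plan is to exploit the identity $\Phi(\cl_{\delta_{\Phi}} A) = \Phi(A)$, which lets us sidestep the Lodato-style axiom needed for the non-descriptive analogue (Theorem~\ref{thm:LodatoProximity}) and reduces the claim to a short description-chase rather than a proximity-chase.

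First I will establish that identity. By Lemma~\ref{lemma:dP2converse} together with axiom (dP.2), for any $x \in X$ we have $\{x\}\ \dnear\ A$ iff $\{x\}\ \dcap\ A \neq \emptyset$ iff $\Phi(x) \in \Phi(A)$. Adopting the natural definition $\cl_{\delta_{\Phi}} A = \{x \in X : \{x\}\ \dnear\ A\}$, this yields $\cl_{\delta_{\Phi}} A = \{x \in X : \Phi(x) \in \Phi(A)\}$. Applying $\Phi$ gives $\Phi(\cl_{\delta_{\Phi}} A) \subseteq \Phi(A)$, and the reverse inclusion follows from $A \subseteq \cl_{\delta_{\Phi}} A$.

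Next I will complete the invariance argument. Fix an arbitrary $z \in \cl_{\delta_{\Phi}} A$; by the identity just established there is some $a \in A$ with $\Phi(z) = \Phi(a)$, so $\{z\}\ \dcap\ \{a\} \neq \emptyset$, and (dP.2) gives $\{z\}\ \dnear\ \{a\}$. Descriptive proximal continuity of $f$ then yields $\{f(z)\}\ \dnear\ \{f(a)\}$, and another application of Lemma~\ref{lemma:dP2converse} forces $\Phi(f(z)) = \Phi(f(a))$. The descriptive invariance hypothesis $\Phi(f(A)) \subseteq \Phi(A)$ now gives $\Phi(f(a)) \in \Phi(A)$, hence $\Phi(f(z)) \in \Phi(A) = \Phi(\cl_{\delta_{\Phi}} A)$. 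Since $z$ was arbitrary, $\Phi(f(\cl_{\delta_{\Phi}} A)) \subseteq \Phi(\cl_{\delta_{\Phi}} A)$, as required.

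The main obstacle I anticipate is the bookkeeping around $\Phi$ on singletons: the step $\{f(z)\}\ \dnear\ \{f(a)\} \Rightarrow \Phi(f(z)) = \Phi(f(a))$ tacitly uses that each point carries a single feature vector, so that descriptive nearness of singletons collapses to equality of their descriptions. This is implicit in the paper's framework but is worth stating explicitly at the outset. Beyond that, no proximity axiom stronger than the descriptive \v{C}ech axioms is needed, which is precisely why the statement, unlike Theorem~\ref{thm:LodatoProximity}, does not need to invoke Lodato.
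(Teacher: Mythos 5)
Your proof is correct and takes essentially the same route as the paper's: both characterize $\cl_{\Phi}A$ as the set of points whose descriptions lie in $\Phi(A)$, push a point of the closure through $f$ using descriptive proximal continuity together with Lemma~\ref{lemma:dP2converse}, and land back in $\Phi(A)$ via the invariance hypothesis $\Phi(f(A))\subseteq\Phi(A)$. The only cosmetic difference is that the paper applies continuity to the pair $(\{x\},A)$ and first derives the stronger set-level conclusion $f(\cl_{\Phi}A)\subseteq\cl_{\Phi}A$, whereas you route through singleton pairs and the identity $\Phi(\cl_{\Phi}A)=\Phi(A)$.
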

\begin{proof}
The descriptive closure of a subset $A$ of $X$ is defined in~\cite[\S 1.21.2]{Peters2013springer} as follows: 	
\[ \cl_{\Phi} A=\{ x\in X \ | \ x \ \delta_{\Phi} \ A \}.\]
Take  an element $x$ in  $ \cl_{\Phi} A$ so that  $x\ \delta_{\Phi} \ A$ and  $\Phi(x) \in \Phi(A)$ by Lemma~\ref{lemma:dP2converse}.     Since $f$ is a descripitive proximally continuous $f(x) \ \delta_{\Phi} \ f(A)$ and $\Phi(f(x))\in \Phi(f(A))$ by Lemma~\ref{lemma:dP2converse}. We also have $\Phi(f(x))\in \Phi(A)$ since $A$ is an invariant set with respect to $f$. Therefore  $f(x)\ \delta_{\Phi} \ A$  and   $f(x) \in \cl_{\Phi} A$. Since this holds for all $x \in \cl_{\Phi}A$, we have $f(\cl_{\Phi} A) \subseteq \cl_{\Phi} A$ so that         $\Phi(f(\cl_{\Phi} A)) \subseteq \Phi(\cl_{\Phi} A)$.
\end{proof}

\begin{definition} \label{def:desfixed}
	Let $(X,\delta_{\Phi})$ be a descriptive proximity space with a probe function $\Phi: X \to \mathbb{R}^n$, $A\in 2^X$, and  $f : (X,\delta_{\Phi})\to (X,\delta_{\Phi})$ a descriptive proximally  continuous map.
	\begin{compactenum}[(i)]
\item $A$  is  a descriptive fixed subset of $f$, provided  $\Phi(f(A))=\Phi(A)$. 
\item $A$ and $f(A)$ are amiable fixed sets, provided $f(A) \ \dcap \ A \neq \emptyset$. 
\item $A$   is an eventual descriptive fixed subset of $f$, provided $A$ is not a descriptive fixed subset while $f^t(A)$ is a descriptive fixed subset for some $t\neq 1$.
\item 
%\colorbox{yellow}{If it is also OK with you, we could change the definition as follows:}\\
$A$ is an almost descriptive fixed subset of $f$, provided $\Phi(f(A))=\Phi(A)$  or  $\Phi(f(A)) \ \delta_{\Phi} \   \Phi(A)$ so that $f(A) \ \dcap \ A \neq \emptyset$ by Lemma~\ref{lemma:dP2converse}.
\end{compactenum}	
\end{definition}

\begin{theorem}\label{thm:amenableRibbons}
Let $(K,\delta_{\Phi})$ be a descriptive proximity space over a CW space $K$ with probe function $\Phi: K \to \mathbb{R}^n$, ribbon $\rb A\in 2^K$, and  $f : (K,\delta_{\Phi})\to (K,\delta_{\Phi})$ a descriptive proximally continuous map such that $f(\rb A) \ \dcap \ \rb A \neq \emptyset$.  Then $\rb A, f(\rb A)$ are amiable fixed sets.
\end{theorem}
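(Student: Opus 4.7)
The plan is essentially to observe that the conclusion is almost immediate from Definition~\ref{def:desfixed}(ii), so the main task is to verify the definition applies and to make the underlying descriptive intersection explicit. By Definition~\ref{def:desfixed}(ii), a pair of sets $A$ and $f(A)$ are amiable fixed sets precisely when $f(A)\ \dcap\ A \neq \emptyset$. With $A$ taken to be the ribbon $\rb A$, this is literally the hypothesis of the theorem. So the first step is simply to identify the hypothesis with the condition in the definition.

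To make the proof more than a one-line citation, I would unpack the meaning of the hypothesis $f(\rb A)\ \dcap\ \rb A \neq \emptyset$ using the definition of descriptive intersection: there exists at least one point $x \in \rb A \cup f(\rb A)$ whose feature vector $\Phi(x)$ lies in $\Phi(\rb A)\cap \Phi(f(\rb A))$. This is exactly the witnessing structure required for amiability, and it also gives, via Axiom (\textbf{dP}.2), that $f(\rb A)\ \dnear\ \rb A$. Conversely, Lemma~\ref{lemma:dP2converse} shows that these two formulations (nonempty descriptive intersection and descriptive nearness) are equivalent in a descriptive \v{C}ech proximity space, so the argument is consistent from either direction.

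I expect no real obstacle here, because the theorem is essentially a direct application of the definition of amiable fixed sets; descriptive proximal continuity of $f$ is not actually needed for this particular conclusion (it only guarantees that $f$ respects the proximity $\dnear$, which is a stronger structural property used in subsequent results). The main content to communicate is that the hypothesis $f(\rb A)\ \dcap\ \rb A \neq \emptyset$ matches the definition verbatim, and so $\rb A$ and $f(\rb A)$ meet the amiability criterion. Thus the proof reduces to invoking Definition~\ref{def:desfixed}(ii), with Lemma~\ref{lemma:dP2converse} supplying the equivalence between the descriptive intersection statement and the descriptive nearness statement $f(\rb A)\ \dnear\ \rb A$.
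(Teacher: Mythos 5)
Your proposal is correct and matches the paper's own argument, which is simply ``Immediate from Def.~\ref{def:desfixed}'': the hypothesis $f(\rb A)\ \dcap\ \rb A \neq \emptyset$ is verbatim the condition in Definition~\ref{def:desfixed}(ii) for $\rb A$ and $f(\rb A)$ to be amiable fixed sets. Your additional observations (the witnessing point via the descriptive intersection, the equivalence with $f(\rb A)\ \dnear\ \rb A$ via ({\bf dP}.2) and Lemma~\ref{lemma:dP2converse}, and the fact that descriptive proximal continuity is not actually used) are accurate but not needed beyond the paper's one-line justification.
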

\begin{proof}
Immediate from Def.~\ref{def:desfixed}.
\end{proof} 

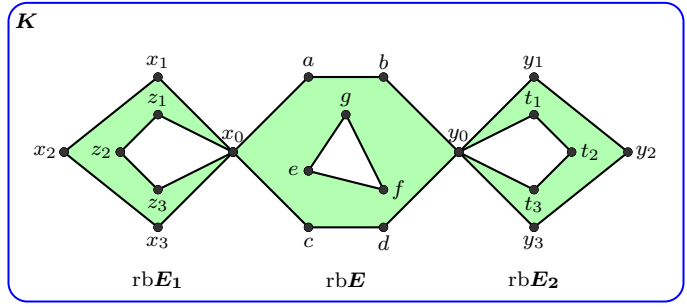
\begin{figure}[!ht]
	\centering
	\begin{pspicture}
	(-4.5,-2)(4.5,2)
	\psframe[linewidth=0.75pt,linearc=0.25,cornersize=absolute,linecolor=blue](-4.5,-2)(4.5,2)

	\psline*[linestyle=solid,linecolor=green!30]%
	(-3.75,0)(-2.5,1)(-1.5,0)(-2.5,-1)(-3.75,0)
	% rb E_1 ribbon outer boundary
	\psline[linestyle=solid,linecolor=black]%
	(-3.75,0)(-2.5,1)(-1.5,0)(-2.5,-1)(-3.75,0)
	%rb E_1  ribbon outer boundary vertexes
	\psdots[dotstyle=o,dotsize=3.5pt,linewidth=1.2pt,linecolor=black,fillcolor=black!80]%
	(-3.75,0)(-2.5,1)(-1.5,0)(-2.5,-1)(-3.75,0)
	% rb E_1 ribbon inner boundary
	\psline*[linestyle=solid,linecolor=white]%
	(-3.0,0)(-2.5,0.5)(-1.5,0)(-2.5,-0.5)(-3.0,0)
	\psline[linestyle=solid,linecolor=black]%
	(-3.0,0)(-2.5,0.5)(-1.5,0)(-2.5,-0.5)(-3.0,0)
	% rb E_1 ribbon inner boundary vertexes
	\psdots[dotstyle=o,dotsize=3.5pt,linewidth=1.2pt,linecolor=black,fillcolor=black!80]%
	(-3.0,0)(-2.5,0.5)(-1.5,0)(-2.5,-0.5)(-3.0,0)

	\psline*[linestyle=solid,linecolor=green!30]%
	(-1.5,0)(-0.5,1.0)(0.5,1.0)(1.5,0)(0.5,-1.0)(-0.5,-1.0)(-1.5,0)
	% rb E_2 ribbon outer boundary
	\psline[linestyle=solid,linecolor=black]%
	(-1.5,0)(-0.5,1.0)(0.5,1.0)(1.5,0)(0.5,-1.0)(-0.5,-1.0)(-1.5,0)
	% rb E_2 ribbon outer boundary vertexes
	\psdots[dotstyle=o,dotsize=3.5pt,linewidth=1.2pt,linecolor=black,fillcolor=black!80]%
	(-1.5,0)(-0.5,1.0)(0.5,1.0)(1.5,0)(0.5,-1.0)(-0.5,-1.0)(-1.5,0)
	% rb E_2 ribbon inner boundary
	\psline*[linestyle=solid,linecolor=white]%
	(-0.5,-0.25)(0,0.5)(0.5,-0.5)(-0.5,-0.25)
	\psline[linestyle=solid,linecolor=black]%
	(-0.5,-0.25)(0,0.5)(0.5,-0.5)(-0.5,-0.25)
	% rb E_2 ribbon inner boundary vertexes
	\psdots[dotstyle=o,dotsize=3.5pt,linewidth=1.2pt,linecolor=black,fillcolor=black!80]%
	(-0.5,-0.25)(0,0.5)(0.5,-0.5)(-0.5,-0.25)

	\psline*[linestyle=solid,linecolor=green!30]%
	(1.5,0)(2.5,1.0)(3.75,0)(2.5,-1.0)(1.5,0)
	% rb E_3 ribbon outer boundary
	\psline[linestyle=solid,linecolor=black]%
	(1.5,0)(2.5,1.0)(3.75,0)(2.5,-1.0)(1.5,0)
	% rb E_3 ribbon outer boundary vertexes
	\psdots[dotstyle=o,dotsize=3.5pt,linewidth=1.2pt,linecolor=black,fillcolor=black!80]%
	(1.5,0)(2.5,1.0)(3.75,0)(2.5,-1.0)(1.5,0)
	% rb E_3 ribbon inner boundary
	\psline*[linestyle=solid,linecolor=white]%
	(1.5,0)(2.5,0.5)(3.0,0)(2.5,-0.5)(1.5,0)
	\psline[linestyle=solid,linecolor=black]%
	(1.5,0)(2.5,0.5)(3.0,0)(2.5,-0.5)(1.5,0)
	% rb E_3 ribbon inner boundary vertexes
	\psdots[dotstyle=o,dotsize=3.5pt,linewidth=1.2pt,linecolor=black,fillcolor=black!80]%
	(1.5,0)(2.5,0.5)(3.0,0)(2.5,-0.5)(1.5,0)

	\rput(-4.25,1.75){\footnotesize $\boldsymbol{K}$}
	\rput(-2.5,-1.7){\footnotesize $\boldsymbol{\rb E_1}$}
	\rput(0,-1.7){\footnotesize $\boldsymbol{\rb E}$}
	\rput(2.5,-1.7){\footnotesize $\boldsymbol{\rb E_2}$}
	\rput(-1.5,0.22){\footnotesize $x_0$}
	\rput(-2.5,1.2){\footnotesize $x_1$}
	\rput(-4,0){\footnotesize $x_2$}
	\rput(-2.5,-1.2){\footnotesize $x_3$}
	\rput(-2.5,0.7){\footnotesize $z_1$}
	\rput(-3.25,0){\footnotesize $z_2$}
	\rput(-2.5,-0.7){\footnotesize $z_3$}
	\rput(-0.5,1.2){\footnotesize $a$}
	\rput(0.5,1.2){\footnotesize $b$}
	\rput(-0.5,-1.2){\footnotesize $c$}
	\rput(0.5,-1.2){\footnotesize $d$}
	\rput(-0.7,-0.25){\footnotesize $e$}
	\rput(0.7,-0.5){\footnotesize $f$}
	\rput(0,0.7){\footnotesize $g$}
	\rput(1.5,0.22){\footnotesize $y_0$}
	\rput(2.5,1.2){\footnotesize $y_1$}
	\rput(4,0){\footnotesize $y_2$}
	\rput(2.5,-1.2){\footnotesize $y_3$}
	\rput(2.5,0.7){\footnotesize $t_1$}
	\rput(3.25,0){\footnotesize $t_2$}
	\rput(2.5,-0.7){\footnotesize $t_3$}
	\end{pspicture}
	\caption{A cell complex $K$ with three ribbons $\rb E_1$, $\rb E$, and $\rb E_2$.}
	\label{fig:fixedribbon}
\end{figure} 

\begin{definition}\label{def:des}
	Let $(X,\delta_{\Phi})$ be a descriptive proximity space with a probe function $\Phi: X \to \mathbb{R}^n$  and $A, B\in 2^X$.  Then $A$ and $B$ are said to be descriptively proximal, provided $\Phi(A)=\Phi(B)$. In that case, we write 
\[
A \ \underset{\mbox{des}}{=} \ B.
\] 
\end{definition}

Note  that 	if $(X,\delta_{\Phi})$ is a descriptive proximity space, $A\in 2^X$, and  $f : (X,\delta_{\Phi})\to (X,\delta_{\Phi})$ is a  descriptive proximally continuous map,  then\\

\begin{compactenum}
	\item $A=B$ implies  $A \ \underset{\mbox{des}}{=} \ B$,  
	\item $f(A)  \ \underset{\mbox{des}}{=} \ A$, provided $A$ is a descriptively fixed subset of $f$,  and 
	\item $f^t(A) \ \underset{\mbox{des}}{=} \ A$ for some positive integer $t\neq 1$, provided $A$ is an eventual descriptive fixed subset of $f$. 
\end{compactenum}

\begin{remark}
Let $K$ be a CW space and let $(K,\delta_{\Phi})$ be a descriptive proximity space with a probe function $\Phi: 2^K \to \mathbb{R}^n$  and ribbons $\rb A, \rb B\in 2^K$.  Then $\rb A$ and $\rb B$ are said to be descriptively proximal, provided $\Phi(\rb A)=\Phi(\rb B)$.
Ribbons $\rb A, \rb B$ are amiable, provided $\rb A \ \underset{\mbox{des}}{=} \ \rb B$.
\end{remark}

\begin{example}
%\colorbox{green}{Use} $\rb E_1, \rb E_2$ from Fig.~\ref{fig:fixedribbon}, instead of $\rb E_6, \rb A$ in this example.\\
%\vspace{3mm}

Let $K$ be the CW space in Fig.~\ref{fig:fixedribbon} and let $(K,\delta_{\Phi})$ be a descriptive proximity space, $\rb E_1, \rb E_2\in 2^{K}$ and let  $f : (2^{K},\delta_{\Phi})\to (2^{K},\delta_{\Phi})$ be a  descriptive proximally continuous map such that
\begin{align*}
f(\rb E_1) &= \Phi(\rb E_1), f(\rb E_2) = \Phi(\rb E_2),\ \mbox{with}\\
\Phi(\rb E_1),  &= \beta_{\alpha}(\rb E_1) = \Phi(\rb E_2) = \beta_{\alpha}(\rb E_2) = 1\ \mbox{(from Theorem~\ref{thm:bridgesAndIntersectingCycles})}.
\end{align*}
Consequently, $\rb E_1\ \dcap\ \rb E_2$.  Hence, from Theorem~\ref{thm:amenableRibbons}, $\rb E_1, \rb E_2$ are amenable ribbons.
Further, from Def.~\ref{def:des}, $\rb E_1 \ \underset{\mbox{des}}{=} \ \rb E_2$, since $\Phi(\rb E_1) = \Phi(\rb E_2)$.
\textcolor{blue}{\Squaresteel}
\end{example}

%\colorbox{red}{Question:}  But $\rb E_6$ and  $\rb A$ are not in the same CW space $K$. Can we also say that they are amiable? 

\begin{example}
	Let $K$ be a cell complex with three ribbons $\rb E_1$, $\rb E$, and $\rb E_2$ shown in Figure~\ref{fig:fixedribbon} equipped with the relations 
	\begin{equation*}
	\begin{split}
	&A \ \delta \ B  \quad : \Leftrightarrow \quad  A \ \mbox{and} \ B \  \mbox{have at least one vertex in common} \\
	\mbox{and} \\
	&A \ \delta_{\Phi} \ B \quad : \Leftrightarrow \quad  \beta_{\alpha}(A) = \beta_{\alpha}(B)
	\end{split}
	\end{equation*}
	for a pair of cell complexes $A,  B \in 2^K$. In that case $K$ is both a proximity space and  a descriptive proximity space and we denote it by the triple  $(K, \delta, \delta_{\Phi})$.
	Let $f$ be  a function 
	\[ f: (K,\delta, \delta_{\Phi}) \to (K,\delta, \delta_{\Phi}) \]
	defined on the set of the vertexes of  the ribbons $\rb E$, $\rb E_1$, and $\rb E_2$ by 
	\begin{align*}
	x_i &\leftrightarrow y_i \quad \mbox{for} \ i=0,1,2,3 \\
	z_j &\leftrightarrow t_j \quad \mbox{for} \ j=1,2,3 \\
	a &\leftrightarrow b \quad \mbox{and} \quad c \leftrightarrow d \\
	g &\mapsto e \mapsto f \mapsto g.
	\end{align*}
	Then $f$ is both proximal continuous and descriptive proximaly continuous. \\
	Notice that $\rb E$ is a fixed subset of $f$ while $\rb E_1$ and $\rb E_2$ are eventually fixed subsets of $f$, since they are not fixed subsets of $f$ and  
	\[ f^2(\rb E_i)=\rb E_i\]
	for $i=1, 2$. Also, $\rb E_1$ and $\rb E_3$ are descriptive fixed subsets of $f$, since 
	\[ \Phi(\rb E_1) = \Phi(f(\rb E_1)) = \Phi(\rb E_2)\]
	and 
	\[  \Phi(\rb E_2) = \Phi(f(\rb E_2)) = \Phi(\rb E_1). \mbox{\textcolor{blue}{\Squaresteel}}\]	
\end{example}

\section{Conjugacy between proximal descriptively continuous maps}
%$\mbox{}$\\
This section introduces proximal conjugacy between two dynamical systems, which is an easy extension of the topological conjugacy~\cite[\S 8.1,p. 243]{AdamsFranzosa:2007}.
Proximal conjugacy is akin to strongly amenable groups in which each of its proximal topological actions has a fixed point~\cite{FrishTamuzFerdowsi2019strongAmenability}.  Let $\sum$ denote either a semigroup or a group.  And let $m(\sum)$ be the set of bounded, real-valued functions $\theta$ on $\sum$ for which
\[
\norm{\theta} = lub_{x\in \sum}\abs{\theta(x)}.
\]
A {\bf mean} $\mu$ on $m(\sum)$ is an element of the $m(\sum)*$ (in the conjugate space $B*$~\cite[p.510]{Day1957amenableSemigroups}) such that, for each $x\in m(\sum)$, we have
\[
glb_{x\in \sum}\theta(x) \leq \mu(x) \leq lub_{x\in \sum}\theta(x).
\]
%$\mbox{}$\\
%\vspace{3mm}

An element of $\mu$ of $m(\sum)*$ is \emph{left}[\emph{right}] invariant, provided 
\[
\mu(\ell_{\sigma}x) = \mu(x)[\mu(r_{\sigma}x) = 
mu(x)]\ \mbox{for all}\ x\in m(\sum), \sigma\in \sum.
\]
$\mbox{}$\\

\begin{definition}~\cite[p.515]{Day1957amenableSemigroups} $\mbox{}$\\
A semigroup (also group) $\sum$ is amenable, provided there is a mean $\mu$ on $m(\sum)$, which is both left and right invariant.
\end{definition} 

We know that a ribbon with bridge edges attached between its cycles has a free fg Abelian group representation.  This leads to the following result as a consequence of results in M.M. Day's 1957 and 1961 papers.

\begin{theorem}\label{thm:amenableRibbonGroup}
Every group representation of a ribbon is amenable.
\end{theorem}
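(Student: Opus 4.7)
The plan is to reduce the claim directly to Day's theorem (Theorem~\ref{thm:Day}) by showing that the group representing a ribbon is finite. First, let $\rb E$ be a ribbon in a CW space $K$. By Theorem~\ref{thm:bridgesAndIntersectingCycles}, $\rb E$ admits a free finitely generated Abelian group representation $G(\rb E)$ with exactly $2k+n$ generators, where $k$ is the number of bridge edges and $n$ is the number of vertices in the intersection of the two nested cycles of $\rb E$. Each such generator $\langle g\rangle$ is, by construction, either a bridge-edge endpoint or a cycle-intersection vertex of $\rb E$.

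Next, I would argue that each generator produces a \emph{finite} cyclic subgroup. Because every ribbon cycle is a simple closed curve built from finitely many path-connected vertices, the sequence of moves starting at a generator $g$ and traversing the cycle returns to $g$ after a bounded number of steps; if that step count is $m$, the cyclic group $\langle g\rangle$ is $\mathbb{Z}/m\mathbb{Z}$. Taking the sum over all $2k+n$ generators, $G(\rb E)$ is then a direct sum of finitely many finite cyclic groups, hence a finite Abelian group.

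Finally, I would apply Theorem~\ref{thm:Day}: since $G(\rb E)$ is a finite group, it admits a two-sided invariant mean $\mu$ on $m(G(\rb E))$, and is therefore amenable. As this argument applies to an arbitrary ribbon in any CW space, every group representation of a ribbon is amenable, completing the proof.

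The main obstacle I expect is the second step: reconciling the word ``free'' in ``free fg Abelian group representation'' with the finiteness needed for an immediate appeal to Theorem~\ref{thm:Day}. Genuinely free Abelian groups on $r\ge 1$ generators are isomorphic to $\mathbb{Z}^r$ and are infinite. One must either invoke Day's more general result that \emph{every} Abelian group is amenable (which the paper cites via~\cite{Day1957amenableSemigroups,Day1961amenableSemigroups} but does not state explicitly), or argue, as sketched above, that the ribbon structure forces torsion on each generator so the representation collapses to a finite Abelian group. Either route suffices, but choosing and justifying one of them is the crux of the argument.
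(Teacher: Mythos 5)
Your proposal is correct and follows essentially the same route as the paper: reduce to Day's theorem by observing that the group representing a ribbon is finite and Abelian. In fact you go further than the paper's one-sentence proof, which simply asserts finiteness ``by construction''; your torsion argument for why each generator has finite order, and your explicit flagging of the tension between ``free'' (hence infinite, as $\mathbb{Z}^r$) and ``finite,'' address a gap the paper leaves unexamined.
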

\begin{proof}
This result is a direct consequence of Theorem 2 from Day~\cite{Day1961amenableSemigroups} and (I)~\cite[p.516]{Day1957amenableSemigroups}, since, by construction, every free fg Abelian group $G$ representation of a ribbon is finite and, by definition, an Abelian semigroup.
\end{proof}

A direct consequence of Theorem~\ref{thm:amenableRibbonGroup} and a result of a generalization of the Kakutani-Markov Theorem~\cite{Day1961amenableSemigroups} is that each amenable ribbon has a fixed point.

\begin{definition}
Two  proximal continuous maps $f: (X,\delta_1) \to (X,\delta_1) $ and $g: (Y,\delta_2) \to (Y,\delta_2)$ are said to be proximal  conjugates, provided there exists a proximal  isomorphism $h: (X,\delta_1)  \to (Y,\delta_2)$ such that $g\circ h = h\circ f$. The function $h$ is called a \emph{proximal conjugacy} between $f$ and $g$.
\end{definition}

\begin{theorem}
	Let $h$ be a proximal conjugacy between $f: (X,\delta_1) \to (X,\delta_1) $ and $g: (Y,\delta_2) \to (Y,\delta_2)$. Then for each $A \subseteq X$ and $n\in \mathbb{Z}_+$, we have $h(f^n(A))=g^n(h(A))$.  
\end{theorem}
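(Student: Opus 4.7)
The plan is to proceed by induction on $n$, using the defining relation $g\circ h = h\circ f$ of the proximal conjugacy as both the base case and the engine of the inductive step. The statement is essentially a purely set-theoretic/functorial fact: once one has a single intertwining relation between $f$ and $g$ via $h$, iterating preserves that intertwining. The proximal structure plays no role in the argument itself beyond ensuring that $h$, $f$, $g$ are honest (proximally continuous) functions so that compositions and images behave as usual.

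First I would handle the base case $n=1$. By the definition of a proximal conjugacy, $g\circ h = h\circ f$, and applying this equality of functions to the subset $A\subseteq X$ gives immediately
\[
h(f(A)) = (h\circ f)(A) = (g\circ h)(A) = g(h(A)),
\]
so the claim holds for $n=1$.

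Next I would carry out the inductive step. Assume as the inductive hypothesis that $h(f^n(A)) = g^n(h(A))$ for some $n\geq 1$. Writing $f^{n+1}(A) = f(f^n(A))$ and applying the $n=1$ case to the subset $f^n(A)\subseteq X$ in place of $A$, we get $h(f^{n+1}(A)) = g(h(f^n(A)))$. Substituting the inductive hypothesis into the right-hand side yields $g(g^n(h(A))) = g^{n+1}(h(A))$, which closes the induction.

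I do not anticipate a genuine obstacle here; the only point requiring a small amount of care is to distinguish notationally between the function-level identity $h\circ f = g\circ h$ and the set-level identity $h(f(A)) = g(h(A))$, and to remember that iterating a function on a subset commutes with the set-image operation (i.e.\ $f^{n+1}(A) = f(f^n(A))$). Once those conventions are fixed, the two-line induction above is the entire argument.
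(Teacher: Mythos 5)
Your proof is correct and follows exactly the route the paper indicates: the paper's own proof is just the single line ``the proof follows from the induction on $n$,'' and your argument is precisely that induction carried out in full, with the base case given by the defining relation $g\circ h = h\circ f$ and the inductive step obtained by applying the base case to $f^n(A)$. No discrepancy with the paper's approach.
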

\begin{proof}
	The proof follows from the induction on $n$.
\end{proof}

%\colorbox{pink}{I've changed the definition of proximal descriptive conjugacy:}
\begin{definition}\label{def:proximalDescriptiveContinuousMaps}
Two  proximal descriptive continuous maps $f: (X,\delta_{\Phi_1}) \to (X,\delta_{\Phi_1}) $ and $g: (Y,\delta_{\Phi_2}) \to (Y,\delta_{\Phi_2})$ are said to be proximal descriptive conjugates, provided there exists a proximal descriptive isomorphism $h: (X,\delta_{\Phi_1})  \to (Y,\delta_{\Phi_2})$ such that $g\circ h(A) \underset{\mbox{des}}{=} h\circ f(A)$ for any $A \in 2^X$. The function $h$ is called a proximal descriptive conjugacy between $f$ and $g$. 
\end{definition}

\begin{remark}
	We see from the definition of a proximal descriptive conjugacy that $g\circ h(A)$ and $h\circ f(A)$ may not be equal but we have 
	\[ \Phi_2(g\circ h(A))= \Phi_2(h\circ f(A)) \]
	for $A\in 2^X$. Moreover $g\circ h(A) \underset{\mbox{des}}{=} h\circ f(A)$ implies  $g(A) \underset{\mbox{des}}{=} h\circ f\circ h^{-1}(A)$  and  $f(A) \underset{\mbox{des}}{=} h^{-1}\circ g\circ h(A)$, so that we have the following commutative diagrams.  
\end{remark}

\begin{tikzcd}
	&  & \Phi_1(f(A))=\Phi_1(h^{-1}gh(A))  & \\
	A \arrow[r, mapsto, red, "f"]  \arrow[dr, mapsto, blue, "h"] 
	& f(A)   \arrow[ur, mapsto, red, "\Phi_1"]  & &   h^{-1}gh(A) 
	\arrow[ul, mapsto, blue, "\Phi_1"']  \\ 
	& h(A) \arrow[r, mapsto, blue, "g"]
	&gh(A)     \arrow[ur, mapsto, blue, "h^{-1}"] & 
\end{tikzcd}

\vspace*{1cm}

\begin{tikzcd}
	 & h^{-1}(C)  \arrow[r, mapsto, blue, "f"]  & fh^{-1}(C)    \arrow[dr, mapsto, blue, "h"] & \\
	 C   \arrow[ur, mapsto, blue, "h^{-1}"] \arrow[r, mapsto, red, "g"] & g(C)    \arrow[dr, mapsto, red, "\Phi_2"]   &  & hfh^{-1}(C)   \arrow[dl, mapsto, blue, "\Phi_2"] \\
	&  & \Phi_2(g(C))=\Phi_2(hfh^{-1}(C))   & 
\end{tikzcd}

\begin{remark}
For proximal descriptive conjugates $f: (X,\delta_{\Phi_1}) \to (X,\delta_{\Phi_1}) $ and $g: (Y,\delta_{\Phi_2}) \to (Y,\delta_{\Phi_2})$, Def.~\ref{def:proximalDescriptiveContinuousMaps} tells us that for $A\subseteq X$ and $C\subseteq Y$, we have 
\begin{align*} 
\Phi_2(g\circ h(A)) &=  \Phi_2(h\circ f(A)),  \\
\Phi_1(f\circ h^{-1}(C)) &= \Phi_1(h^{-1}\circ g(C)).\qquad \mbox{\textcolor{blue}{\Squaresteel}}
\end{align*}
\end{remark}

Note that  if  $h$ is a proximal descriptive conjugacy between $f: (X,\delta_{\Phi_1}) \to (X,\delta_{\Phi_1}) $ and $g: (Y,\delta_{\Phi_2}) \to (Y,\delta_{\Phi_2})$, then $A \ \underset{\mbox{des}}{=} \  B$  implies $h(A) \ \underset{\mbox{des}}{=} \  h(B)$ for $A, B \in 2^X$.\\
\vspace{3mm}

%\colorbox{green}{Tane, please insert} (1) drawing to illustrate proximal descriptive conjugacy and (2):
%\begin{example}
%\colorbox{green}{Tane, please insert} an example that comments on the proposed drawing that illustrates proximal descriptive conjugacy.
%\qquad \mbox{\textcolor{blue}{\Squaresteel}}
%\end{example}

\begin{theorem} \label{thm:desconj}
Let $h$ be a proximal descriptive conjugacy between $f: (X,\delta_{\Phi_1}) \to (X,\delta_{\Phi_1}) $ and $g: (Y,\delta_{\Phi_2}) \to (Y,\delta_{\Phi_2})$. Then for each $A \in 2^X$ and $n\in \mathbb{Z}_+$, we have $h(f^n(A)) \ \underset{\mbox{des}}{=} \   g^n(h(A))$.  
\end{theorem}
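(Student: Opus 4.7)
The plan is to establish the result by induction on $n$. The base case $n=1$ is immediate, since Definition~\ref{def:proximalDescriptiveContinuousMaps} applied to $A$ already gives $h\circ f(A) \underset{\mbox{des}}{=} g\circ h(A)$, which is exactly $h(f^{1}(A)) \underset{\mbox{des}}{=} g^{1}(h(A))$.

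For the inductive step, suppose $h(f^{n}(A)) \underset{\mbox{des}}{=} g^{n}(h(A))$ for some $n\geq 1$. Writing $f^{n+1}(A)=f(f^{n}(A))$ and applying the defining conjugacy relation to the subset $f^{n}(A)\in 2^{X}$ (the conjugacy holds for every element of $2^{X}$, including iterates), one obtains
\[
h(f^{n+1}(A)) \;=\; h\bigl(f(f^{n}(A))\bigr) \underset{\mbox{des}}{=} g\bigl(h(f^{n}(A))\bigr).
\]
Then one substitutes $h(f^{n}(A))$ by $g^{n}(h(A))$ inside the outer $g$ using the inductive hypothesis, producing $g(h(f^{n}(A))) \underset{\mbox{des}}{=} g(g^{n}(h(A))) = g^{n+1}(h(A))$, and chains the two descriptive equalities by transitivity of $\underset{\mbox{des}}{=}$ to close the induction.

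The hard part will be justifying the substitution step, namely that $B \underset{\mbox{des}}{=} C$ implies $g(B) \underset{\mbox{des}}{=} g(C)$. Descriptive proximal continuity of $g$ on its own only yields the strictly weaker overlap $g(B)\,\delta_{\Phi_{2}}\,g(C)$ via Lemma~\ref{lemma:dP2converse}; the note preceding the theorem explicitly records the preservation of $\underset{\mbox{des}}{=}$ for the descriptive proximal isomorphism $h$ but not for $g$. I would close this gap in one of two ways: either appeal to the paper's standing convention that $\Phi$ assigns a single feature vector in $\mathbb{R}^{n}$ to each subset, in which case the descriptive overlap $\Phi_{2}(g(B))\cap \Phi_{2}(g(C))\neq\emptyset$ collapses to $\Phi_{2}(g(B))=\Phi_{2}(g(C))$ and so to $g(B)\underset{\mbox{des}}{=}g(C)$; or reroute the induction through the $X$-side by rewriting the inductive hypothesis as $h^{-1}\circ g^{n}\circ h(A) \underset{\mbox{des}}{=} f^{n}(A)$, so that every substitution occurs under $h$ or $h^{-1}$, where preservation of $\underset{\mbox{des}}{=}$ has already been granted in the note, and only then transferring the resulting identity back across $h$ at the final step.
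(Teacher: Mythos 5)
Your proposal follows the same route as the paper, whose entire proof is the single sentence ``The proof follows from the induction on $n$''; your base case and the decomposition $f^{n+1}(A)=f(f^{n}(A))$ are exactly what that one-liner must mean, so the approaches coincide and you are simply supplying the omitted detail. Your diagnosis of the substitution step is the valuable part and is accurate: the induction genuinely requires that $B \ \underset{\mbox{des}}{=} \ C$ imply $g(B) \ \underset{\mbox{des}}{=} \ g(C)$, while descriptive proximal continuity of $g$ only yields $g(B)\ \delta_{\Phi_2}\ g(C)$, i.e.\ an overlap of descriptions. Of your two proposed repairs, only the first actually closes the gap: under the paper's stated convention that a probe assigns a single feature vector in $\mathbb{R}^n$ to a subset, both $\Phi_2(B)=\Phi_2(C)$ and the overlap of $\Phi_2(g(B))$ with $\Phi_2(g(C))$ collapse to equality of vectors, the substitution is licensed, and the induction closes. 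The second repair does not work as described: rewriting the hypothesis as $h^{-1}\circ g^{n}\circ h(A) \ \underset{\mbox{des}}{=} \ f^{n}(A)$ still forces you, at the step $f^{n+1}(A) \ \underset{\mbox{des}}{=} \ h^{-1}\circ g\circ h(f^{n}(A))$, to replace $h(f^{n}(A))$ by $g^{n}(h(A))$ \emph{inside} the outer $g$, so the same unproved preservation property of $g$ is needed; conjugating by $h$ and $h^{-1}$ on the outside cannot shield a substitution that occurs in the interior of $g$. In short: same approach as the paper, correct modulo the single-feature-vector reading of $\Phi$, and you have correctly located the hypothesis the paper leaves implicit.
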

\begin{proof}
The proof follows from the induction on $n$.
\end{proof}

\begin{corollary}\label{cor:proxDescripConjugacy}
		Let $h$ be a proximal descriptive conjugacy between $f: (X,\delta_{\Phi_1}) \to (X,\delta_{\Phi_1}) $ and $g: (Y,\delta_{\Phi_2}) \to (Y,\delta_{\Phi_2})$.
		
		\begin{compactenum}[a)]
			\item If $A$ is a descriptively fixed subset of $f$, then $h(A)$ is a descriptively fixed subset of $g$.
			\item If $A$ is an eventual descriptively fixed subset of $f$, then $h(A)$ is an eventual  descriptively fixed subset of $g$.
			\item If $A$ is an almost descriptively fixed subset of $f$, then $h(A)$ is an almost  descriptively fixed subset of $g$.
		\end{compactenum}
\end{corollary}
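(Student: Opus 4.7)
\medskip

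\noindent\textbf{Proof proposal.} The plan is to treat the three parts as successively more delicate applications of the same two tools: (i) Theorem~\ref{thm:desconj}, which tells us $h(f^n(A)) \ \underset{\mbox{des}}{=}\ g^n(h(A))$ for every $n \in \mathbb{Z}_+$ and every $A \in 2^X$, and (ii) the fact (recorded just before Theorem~\ref{thm:desconj}) that, since $h$ is a proximal descriptive isomorphism, both $h$ and $h^{-1}$ preserve the relation $\underset{\mbox{des}}{=}$. Throughout, I will systematically rewrite the description of a $g$-iterate of $h(A)$ as the description of the $h$-image of the corresponding $f$-iterate of $A$, and then use the hypothesis on $A$.

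For part (a), I would start with the case $n = 1$ of Theorem~\ref{thm:desconj}, namely $g(h(A)) \ \underset{\mbox{des}}{=}\ h(f(A))$. If $A$ is descriptively fixed under $f$, then $f(A) \ \underset{\mbox{des}}{=}\ A$, and applying $h$ (which preserves $\underset{\mbox{des}}{=}$) gives $h(f(A)) \ \underset{\mbox{des}}{=}\ h(A)$. Chaining the two relations yields $g(h(A)) \ \underset{\mbox{des}}{=}\ h(A)$, i.e.\ $\Phi_2(g(h(A))) = \Phi_2(h(A))$, so $h(A)$ is descriptively fixed under $g$.

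For part (b) the argument has two halves. To show $g^t(h(A)) \ \underset{\mbox{des}}{=}\ h(A)$ for the appropriate $t \neq 1$, I would apply Theorem~\ref{thm:desconj} at exponents $t$ and $t+1$, giving $g^{t}(h(A)) \ \underset{\mbox{des}}{=}\ h(f^t(A))$ and $g^{t+1}(h(A)) \ \underset{\mbox{des}}{=}\ h(f^{t+1}(A))$; combining these with the hypothesis $\Phi_1(f^{t+1}(A)) = \Phi_1(f^t(A))$ (and the fact that $h$ preserves $\underset{\mbox{des}}{=}$) produces $\Phi_2(g^{t+1}(h(A))) = \Phi_2(g^t(h(A)))$, i.e.\ $g^t(h(A))$ is descriptively fixed under $g$. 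The more delicate half is showing that $h(A)$ itself is \emph{not} descriptively fixed under $g$. I would argue by contradiction: if $g(h(A)) \ \underset{\mbox{des}}{=}\ h(A)$, then combining with $g(h(A)) \ \underset{\mbox{des}}{=}\ h(f(A))$ yields $h(f(A)) \ \underset{\mbox{des}}{=}\ h(A)$; now applying $h^{-1}$ (which, being a proximal descriptive isomorphism too, preserves $\underset{\mbox{des}}{=}$) collapses this to $f(A) \ \underset{\mbox{des}}{=}\ A$, contradicting the hypothesis that $A$ is not descriptively fixed under $f$. This step is the main obstacle, because it is the only place in the proof where the \emph{isomorphism} half of the definition of proximal descriptive conjugacy is genuinely used; everything else only needs that $h$ is descriptively proximally continuous.

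For part (c), I would split into the two clauses of Def.~\ref{def:desfixed}(iv). If $\Phi_1(f(A)) = \Phi_1(A)$, then $A$ is in fact descriptively fixed under $f$, and part (a) already yields that $h(A)$ is descriptively fixed (hence almost descriptively fixed) under $g$. If instead $f(A)\ \delta_{\Phi_1}\ A$ with $f(A)\ \dcap\ A \neq \emptyset$, descriptive proximal continuity of $h$ gives $h(f(A))\ \delta_{\Phi_2}\ h(A)$; by Lemma~\ref{lemma:dP2converse} this means $\Phi_2(h(f(A))) \cap \Phi_2(h(A)) \neq \emptyset$, and since $\Phi_2(g(h(A))) = \Phi_2(h(f(A)))$ from Theorem~\ref{thm:desconj}, we conclude $\Phi_2(g(h(A))) \cap \Phi_2(h(A)) \neq \emptyset$, i.e.\ $g(h(A))\ \delta_{\Phi_2}\ h(A)$ and $g(h(A))\ \dcap\ h(A) \neq \emptyset$. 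This gives exactly the second clause of Def.~\ref{def:desfixed}(iv) for $h(A)$ under $g$, completing the corollary.
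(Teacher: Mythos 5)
Your proposal is correct and rests on the same two ingredients as the paper's own proof --- Theorem~\ref{thm:desconj} together with the fact that $h$ preserves $\underset{\mbox{des}}{=}$ --- and your parts (a) and (c) are essentially identical to the paper's argument (the paper goes straight from $h(A)\ \delta_{\Phi_2}\ h(f(A))$ to $h(A)\ \delta_{\Phi_2}\ g(h(A))$; your extra step through $\dcap$ and Lemma~\ref{lemma:dP2converse} just makes the substitution explicit). The one genuine divergence is in part (b). The paper reads the hypothesis as $\Phi_1(f^n(A))=\Phi_1(A)$ for some $n>1$, applies $h$ and Theorem~\ref{thm:desconj} once to get $g^n(h(A))\ \underset{\mbox{des}}{=}\ h(A)$, and then merely \emph{asserts} that $h(A)$ is not descriptively fixed ``since $A$ is not and $h$ is an isomorphism.'' You instead take Definition~\ref{def:desfixed}(iii) literally --- $f^t(A)$ is itself a descriptively fixed subset, i.e.\ $\Phi_1(f^{t+1}(A))=\Phi_1(f^t(A))$ --- and compare Theorem~\ref{thm:desconj} at exponents $t$ and $t+1$, which proves the corresponding literal statement for $g^t(h(A))$; and you supply the contradiction argument (push $g(h(A))\ \underset{\mbox{des}}{=}\ h(A)$ back through $h^{-1}$ to get $f(A)\ \underset{\mbox{des}}{=}\ A$) that the paper leaves implicit. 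Both readings of ``eventual'' are defensible, since the paper itself states the condition in both forms (Def.~\ref{def:desfixed}(iii) versus item~3 of the note following Def.~\ref{def:des}); your version is the more careful one, and your observation that the non-fixedness step is the only place where the \emph{isomorphism} half of the hypothesis is genuinely needed is a worthwhile point the paper does not make.
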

\begin{proof}
	%\colorbox{pink}{The following proof is new.}\\
	\begin{compactenum}[a)]
		\item  Let $A$ be a descriptively fixed subset of $f$. That is,  $\Phi_1(f(A))= \Phi_1(A)$. In other words, we have $f(A) \ \underset{\mbox{des}}{=} \  A$. 
		Since $h$ is a proximal isomorhism,  $h$ preserves desciptive proximity  $h(f(A)) \ \underset{\mbox{des}}{=} \  h(A)$.  By Theorem~\ref{thm:desconj}, $g(h(A)) \ \underset{\mbox{des}}{=} \   h(A)$ so that $h(A)$ is a  descriptively fixed subset of $g$. \\ 
		\item Let $A$ be an eventual descriptively fixed subset of $f$. That is, $A$ is not a descriptively fixed subset of $f$ but  $\Phi_1(f^n(A))= \Phi_1(A)$ for some positive integer $n>1$. In other words, we have $f^n(A) \ \underset{\mbox{des}}{=} \  A$. 
		Since $h$ is a proximal isomorhism,  $h$ preserves being equal  in a descriptive sense:  $h(f^n(A)) \ \underset{\mbox{des}}{=} \  h(A)$. By Theorem~\ref{thm:desconj}, $g^n(h(A)) \ \underset{\mbox{des}}{=} \  h(A)$. 
		Note that $h(A)$ is not a descriptively fixed subset of $g$ since  $A$ is not a descriptively fixed subset of $f$  and $h$ is an isomorphism. So, $h(A)$ is an eventual descriptively fixed subset of $g$. \\ 
		\item Let $A$ be  an almost descriptively fixed subset of $f$. That is, $f(A) \ \underset{\mbox{des}}{=} \  A$ or     $A \ \delta_{\Phi_1}  \ f(A)$. If $f(A) \ \underset{\mbox{des}}{=} \  A$, then  we are done. Let $A \ \delta_{\Phi_1}  \ f(A)$. Since $h$  is a proximal isomorphism, we have $h(A) \ \delta_{\Phi_2}  \ h(f(A))$. By Theorem~\ref{thm:desconj},  $h(A) \ \delta_{\Phi_2} \ g(h(A))$ so that $h(A)$ is a descriptively fixed subset of $g$.
	\end{compactenum}
\end{proof}

\section{Weak conjugacy between descriptive proximally continuous maps}
%$\mbox{}$\\
This section introduces weak conjugacy between  descriptive proximally continuous maps.

\begin{definition}\label{def:weakProxConjugacy}
	Two  proximally continuous maps $f: (X,\delta_1) \to (X,\delta_1) $ and $g: (Y,\delta_2) \to (Y,\delta_2)$ are said to be weakly proximal conjugates, provided there exists a proximal  isomorphism $h:(X,\delta_1) \to (Y,\delta_2)$ such that for any $A\in 2^X$,  $g\circ h(A)\ \delta_2\ h\circ f(A)$. Note that this also implies that $f\circ h^{-1}(C)\ \delta_1  \ h^{-1}\circ g(C)$ for any $C\in 2^Y$.
	The function $h$ is called a  weakly proximal conjugacy between $f$ and $g$. 
\end{definition}

\begin{theorem}
Let $h$ be a weakly proximal conjugacy between $f: (X,\delta_1) \to (X,\delta_1) $ and $g: (Y,\delta_2) \to (Y,\delta_2)$. Then for each $A\in 2^X$ and $n\in \mathbb{Z}_+$, we have\\ 
$h(f^n(A)) \ \delta_2 \ g^n(h(A))$.  
\end{theorem}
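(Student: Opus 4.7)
The plan is to proceed by induction on $n$, mirroring the strategy used in the proof of Theorem~\ref{thm:desconj} but with the weaker relation $\delta_2$ in place of $\underset{\mbox{des}}{=}$. The base case $n = 1$ is immediate: Definition~\ref{def:weakProxConjugacy} asserts $g \circ h(A)\ \delta_2\ h \circ f(A)$ for every $A \in 2^X$, and symmetry of proximity (axiom P.1) rephrases this as $h(f(A))\ \delta_2\ g(h(A))$, which is exactly the $n = 1$ case.

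For the inductive step, I would assume $h(f^n(A))\ \delta_2\ g^n(h(A))$ holds for every $A \in 2^X$ and aim to establish $h(f^{n+1}(A))\ \delta_2\ g^{n+1}(h(A))$. First I would apply the defining weak-conjugacy condition to the set $f^n(A)$ in place of $A$, obtaining
\[
g\bigl(h(f^n(A))\bigr)\ \delta_2\ h\bigl(f(f^n(A))\bigr) = h(f^{n+1}(A)).
\]
Next I would push the induction hypothesis $h(f^n(A))\ \delta_2\ g^n(h(A))$ forward under $g$, which is a proximally continuous map on $(Y,\delta_2)$, to get
\[
g\bigl(h(f^n(A))\bigr)\ \delta_2\ g\bigl(g^n(h(A))\bigr) = g^{n+1}(h(A)).
\]

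The two steps above place $g(h(f^n(A)))$ in proximity to both $h(f^{n+1}(A))$ and $g^{n+1}(h(A))$, and the desired conclusion $h(f^{n+1}(A))\ \delta_2\ g^{n+1}(h(A))$ follows by chaining through this intermediate set. The main obstacle is precisely this chaining: the \v{C}ech axioms (P.0)--(P.3) do not include transitivity, so I would have to justify the combination by appealing either to a Lodato-type strengthening (as was already used in Theorem~\ref{thm:LodatoProximity}) or by working at the level of the intersection criterion (via Lemma~\ref{lemma:dP2converse} and axiom P.2) so that a witness point in the common intersection $g(h(f^n(A))) \cap h(f^{n+1}(A)) \cap g^{n+1}(h(A))$ establishes the direct proximity between the outer two sets. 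The remainder of the argument (invoking proximal continuity of $g$ and iterating the definition) is routine and does not require further machinery.
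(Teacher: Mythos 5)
Your induction follows exactly the strategy the paper intends---its entire proof is the single sentence ``The proof follows from the induction on $n$''---and your base case is correct. But in working out the inductive step you have put your finger on a genuine gap, and it is a gap in the proof of the theorem as such, not an artifact of your formulation. The two relations you derive, $g(h(f^n(A)))\ \delta_2\ h(f^{n+1}(A))$ (from Definition~\ref{def:weakProxConjugacy} applied to $f^n(A)$) and $g(h(f^n(A)))\ \delta_2\ g^{n+1}(h(A))$ (from the induction hypothesis and proximal continuity of $g$), share only the intermediate set $g(h(f^n(A)))$, and no \v{C}ech axiom ({\bf P}.0)--({\bf P}.3) lets you pass from ``$C\ \delta_2\ D$ and $C\ \delta_2\ E$'' to $D\ \delta_2\ E$. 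Neither of your proposed repairs closes this. The Lodato axiom is not transitivity: it says $A\ \delta\ B$ together with $\{b\}\ \delta\ C$ for every $b\in B$ implies $A\ \delta\ C$, which is not the configuration you have. The witness-point route also fails: even reading $\delta_2$ as nonempty intersection (as the paper's gloss of proximal continuity suggests), pairwise intersection of three sets does not yield a point of the triple intersection, so you cannot conclude $h(f^{n+1}(A))\cap g^{n+1}(h(A))\neq\emptyset$; and Lemma~\ref{lemma:dP2converse} concerns the descriptive relation and descriptive intersection, not the ordinary $\delta_2$ of this theorem.

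The contrast with Theorem~\ref{thm:desconj} is instructive: there the relation being propagated is $\underset{\mbox{des}}{=}$, i.e.\ equality of descriptions, which is an equivalence relation, so the analogous chaining step is legitimate and the induction closes. Here the statement appears to require either an added transitivity-type hypothesis on $\delta_2$ or a strengthened notion of weak conjugacy; as written, neither your elaboration nor the paper's one-line proof establishes the inductive step for $n\ge 2$. You deserve credit for identifying the obstruction honestly rather than asserting the chain goes through.
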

\begin{proof}
The proof follows from the induction on $n$.
\end{proof}

\begin{definition}\label{def:weakProxDescrConjugates}
Two descriptive proximally continuous maps $f: (X,\delta_{\Phi_1}) \to (X,\delta_{\Phi_1}) $ and $g: (Y,\delta_{\Phi_2}) \to (Y,\delta_{\Phi_2})$ are said to be weakly proximal descriptive conjugates, provided there exists a proximal descriptive isomorphism $h: (X,\delta_{\Phi_1})  \to (Y,\delta_{\Phi_2})$ such that $g\circ h(A)\ \delta_{\Phi_2}\ h\circ f(A)$ for any $A \in 2^X$. Note that this also implies  $f\circ h^{-1}(C)\ \delta_{\Phi_2}\ h^{-1}\circ g(C)$ for any $C\in 2^Y$. The function $h$ is called a weakly proximal descriptive conjugacy between $f$ and $g$. 
\end{definition}

\begin{remark}
For weakly proximal descriptive conjugates $f: (X,\delta_{\Phi_1}) \to (X,\delta_{\Phi_1}) $ and $g: (Y,\delta_{\Phi_2}) \to (Y,\delta_{\Phi_2})$, Def.~\ref{def:weakProxDescrConjugates} and Lemma~\ref{lemma:dP2converse} tell us that for $A\in 2^X$ and $C\in 2^Y$, we have 
\begin{align*} 
g\circ h(A)\ &\dcap\  f\circ h(A)\neq \emptyset, \\
f\circ h^{-1}(C)\ &\dcap\ h^{-1}\circ g(C) \neq \emptyset.\qquad \mbox{\textcolor{blue}{\Squaresteel}}
\end{align*}
\end{remark}

\bibliographystyle{amsplain}
\bibliography{NSrefs}

\end{document}